\documentclass[a4paper,11pt]{article}

\usepackage{authblk}
\usepackage{pst-all}
\usepackage{amsmath,amsthm,amssymb,mathrsfs,latexsym,amsfonts}
\usepackage{graphicx,psfrag,epsfig}
\usepackage[english]{babel}
\usepackage[latin1]{inputenc}
\usepackage{geometry}
\geometry{top=3.3cm, bottom=3cm, left=2.5cm , right=2.5cm}
%
{\begin{enumerate} \item[$\diamond$] \textsc{Proof :} }%
{\hfill $\square$\end{enumerate}}

\newtheorem{theoreme}{Theorem}[section]

\newtheorem{lemme}[theoreme]{Lemma}
\newtheorem{prop}[theoreme]{Proposition}

\newtheorem{cor}[theoreme]{Corollary}
\newtheorem{definition}[theoreme]{Definition\rm}

\newtheorem*{propositionA}{Proposition A}
\newtheorem*{propositionB}{Proposition B}

\begin{document}

\bibliographystyle{amsalpha}
\def\MP{\,{<\hspace{-.5em}\cdot}\,}
\def\SP{\,{>\hspace{-.3em}\cdot}\,}
\def\PM{\,{\cdot\hspace{-.3em}<}\,}
\def\PS{\,{\cdot\hspace{-.3em}>}\,}
\def\EP{\,{=\hspace{-.2em}\cdot}\,}
\def\PP{\,{+\hspace{-.1em}\cdot}\,}
\def\PE{\,{\cdot\hspace{-.2em}=}\,}
\def\N{\mathbb N}
\def\C{\mathbb C}
\def\Q{\mathbb Q}
\def\R{\mathbb R}
\def\T{\mathbb T}
\def\A{\mathbb A}
\def\Z{\mathbb Z}
\def\demi{\frac{1}{2}}
\def\d{\Delta}
\def\Max{\rm}
\def\norm{{\rm norm \,}}
\def\conorm{{\rm conorm \,}}
\def\a{\alpha}
\def\demi{\frac{1}{2}}
\def\d{\Delta}
\def\t{\theta}
\def\graph{{\rm graph \,}}
\def\Im{{\rm Im \,}}

\begin{titlepage}
\author{Lara Sabbagh \thanks{This work was partially supported by the EPSRC grant EP/J003948/1.}}
\title{\LARGE{\textbf{An inclination lemma for normally hyperbolic manifolds with an application to diffusion}}}
\affil{Mathematics Institute, University of Warwick, email: l.el-sabbagh@warwick.ac.uk}
\end{titlepage}

\maketitle

\begin{abstract}Let ($M$, $\Omega$) be a smooth symplectic manifold and $f:M\rightarrow M$ be
a symplectic diffeomorphism of class $C^l$ ($l\geq 3$). Let $N$ be a compact submanifold of $M$ which is boundaryless and normally hyperbolic for
$f$. We suppose that $N$ is controllable and that its stable and unstable bundles are trivial. We consider a
$C^1$-submanifold $\d$ of $M$ whose dimension is equal to the
dimension of a fiber of the unstable bundle of $T_NM$. We suppose
that $\d$ transversely intersects the stable manifold of $N$. Then,
we prove that for all $\varepsilon>0$, and for $n$ $\in$ $\N$ large
enough, there exists $x_n$ $\in$ $N$ such that $f^n(\d)$ is
$\varepsilon$-close, in the $C^1$ topology, to the strongly unstable
manifold of $x_n$.  

As an application of this $\lambda$-lemma, we prove the existence of shadowing orbits for a
finite family of invariant minimal sets (for which we do not assume
any regularity) contained in a normally hyperbolic manifold and having heteroclinic connections. As a particular case,
we recover classical results on the existence of diffusion orbits
(Arnold's example).

\end{abstract}

\bigskip



\section{Introduction}
In his famous note \cite{Arn64}, Arnold gave the first example of a
three-degree-of-freedom system where diffusion orbits shadowing
whiskered tori were constructed. More precisely, the system admits
orbits for which the action undergoes a drift of length independent
of the size of the perturbation. Arnold's example was chosen so that
the Lagrangian invariant tori in the unperturbed system break down 
under the perturbation and  give rise to  partially hyperbolic tori in the 
perturbed system. 

The diffusion mechanism is then based on the existence of
a transition chain, that is, a family of invariant minimal tori with
heteroclinic connections. One gets the orbits shadowing the extremal tori of 
this chain by an ``obstruction argument'' satisfied by each torus of the chain. 
This obstruction argument was first 
proved in the paper \cite{Mar96} as a corollary of a partially hyperbolic 
$\lambda$-lemma. The proof was then improved in \cite{FM}  (see also \cite{C00}).

In the present  paper, we prove a $\lambda$-lemma (also called inclination lemma) for normally hyperbolic
invariant manifolds, which turns out to be a new tool for proving the
obstruction argument as well as several generalizations. This
$\lambda$-lemma deals with normally hyperbolic manifolds instead of partially hypebolic tori.
This is not a genuine restriction since one can in general embed partially hyperbolic tori into
their central manifolds which, as a rule, are normally hyperbolic. In that respect, this paper
generalizes the results of \cite{Mar96}, \cite{C00} and
\cite{FM}, and enables us to significatively simplify the previous proofs. Moreover, our $\lambda$-lemma can be applied to more general systems than that of
Arnold (\cite{DLS}, \cite{DH}, \cite{GR}, \cite{GR09},...) and can relate to the variational methods (which is another approach to diffusion problems) where significant contributions were given by Bernard, Bessi, Cheng, Kaloshin and many others.

We first state and prove a $\lambda$-lemma for normally
hyperbolic invariant manifolds. Given a normally hyperbolic
invariant manifold $N$  for a diffeomorphism $f$, we consider a
submanifold that transversely intersects the stable manifold of $N$
and whose dimension is equal to the dimension of a fiber of the
unstable bundle. We prove that under iteration by $f$, this
submanifold is as close  as desired (in the $C^1$ topology) to a
suitable unstable leaf. The $\lambda$-lemma will enable us to
prove the existence of drifting orbits along a chain of invariant
minimal sets contained in a normally
hyperbolic manifold, without any assumption on the nature of the invariant sets
(in particular, they do not need to be submanifolds). 
As an easy particular case, we recover Arnold's example.
In addition, the $\lambda$-lemma applies immediately to the
examples  of Delshams, De La Llave and Seara (see \cite{DLS} and 
the references therein) and yields the diffusion orbits.


In this paper, we will limit ourselves to the symplectic case and we will assume that our normally hyperbolic manifold
has  trivial stable and unstable bundles 
(this will in particular give us  easy regularity conditions for the lamination of the invariant
manifolds). This will be no restriction to us since all the applications that we have in mind will 
fall into this category (diffusion orbits, Easton's windows,...). Moreover, we will adopt a very 
basic point of view and depict the geometry of the iterates of our transverse manifolds instead
of using a more synthetic method (fixed point theorem for instance). In particular, this will 
enable us to directly use our various computations for the construction of windows and for
estimating the transition times in a subsequent work. As a counterpart, we will have to use 
the existence of ``controlled'' straightening neighborhoods for our manifold, which requires 
the previous (maybe unnecessary) assumptions.

The normally hyperbolic invariant manifolds we consider will be
compact for technical simplicity but the non-compactness could
easily be replaced with uniform lower bounds for the first and
second derivatives of our diffeomorphisms and the constants of
hyperbolicity (see (\ref{VNH}) below).
Finally, let us point out that eventhough we  prove the $\lambda$-lemma for discrete systems, 
as usual analogous results hold for the continuous time Hamiltonian systems.

%

\vskip2mm

\emph{Acknowledgments.}
I would like to thank Jean-Pierre Marco for having suggested these questions to me and for having generously shared his ideas with me.

\section{A reminder on normally hyperbolic invariant manifolds and convention}
We begin with a reminder on normally hyperbolic manifolds in a general context and then specialize to the symplectic case where we can use a ``controlled'' straightening neighborhood in which it is easy to depict the geometry of the invariant foliations induced by normal hyperbolicity.

\subsection{General definitions}\label{GD}

Let $M$ be a smooth $n$-dimensional manifold ($n\geq3$) and
$f:M\rightarrow M$ be a $C^l$-diffeomorphism ($l\geq 1$) which
leaves a smooth boundaryless compact submanifold $N$ of $M$
invariant. Given a Riemannian metric $\parallel.\parallel$ on $M$
and a subbundle $E$ of $T_NM$ invariant under $Df$, we set:
$$\norm(Df_{|_E})=\sup\{\|Df(a)_{|_{E_a}}\|; a \in N\},\quad
\conorm(Df_{|_E})=(\norm(Df^{-1}_{|_E}))^{-1}.$$


\begin{definition} Let $q \leq l$ ($q$ $\in$ $\N^*$). The manifold $N$ is $q$-normally
hyperbolic for $f$ if the tangent bundle of $M$ restricted to $N$
splits into three continuous subbundles $T_NM=TN\oplus E^s \oplus
E^u$ invariant under $Df$, such that
\begin{equation}\label{VNH}\norm(Df_{|_{E^s}})<(\conorm(Df_{|_{TN}}))^q\leq1\leq
(\norm(Df_{|_{TN}}))^q<\conorm(Df_{|_{E^u}}).\end{equation}
\end{definition}

This says that the behavior of $f$ normal to $N$ dominates the
tangent behavior of $f^q$ and is hyperbolic.

Now we state the local stable/unstable manifolds theorem. We do not
mean to give the most general possible results, we rather limit
ourselves to those which are strictly necessary for our purposes.
For a more elaborate study on invariant manifolds, we refer to
\cite{HPS}, \cite{Cha} and \cite{BB}.

\bigskip

\noindent\textbf{Theorem \cite{HPS}}. \textit{Let $f$, $M$ and $N$
be as above. We suppose that $N$ is $q$-normally hyperbolic for $f$.
Then if $d$ is the distance associated with the Riemannian metric on
$M$, the following properties hold true:}

\vspace{0.2cm}
\noindent\textbf{ 1. Existence, characterization and smoothness.} \textit{There
exists a neighborhood $\mathcal{O}$ of $N$ in $M$ such that the
sets}:
\[
W^s_{loc}(N)=\Big\{y\in \mathcal{O}\,; f^n(y)\in \mathcal{O},\,\forall n \in \N\Big\}
\:\:\:{{\textrm and}}\:\:\:
W^u_{loc}(N)=\Big\{y\in\mathcal{O}\,;\,f^{-n}(y)\in \mathcal{O},\, \forall n \in \N\Big\}
\]
\textit{are $C^q$-manifolds that satisfy}
\begin{itemize}
\item $\forall \,y\, \in\,W^s_{loc}(N), \forall \,\rho\, \in\, \left]\, \norm(Df_{|_{E^s}});\, \conorm(Df_{|_{TN}})\,\right[,\lim\limits_{n\rightarrow\infty}\rho^{-n}d(f^n(y),N)=0$,
\item$\forall \,y\, \in\,W^u_{loc}(N), \forall \,\rho\, \in\,\left]\,\norm(Df_{|_{TN}});\,\conorm(Df_{|_{E^u}})\,\right[,
\lim\limits_{n\rightarrow \infty}\rho^{n}d(f^{-n}(y),N)=0$.
\end{itemize}
\textit{Moreover, $W^u_{loc}(N)$ and $W^s_{loc}(N)$ are tangent to $TN\oplus
E^u$ and $TN\oplus E^s$ respectively at each point of $N$}.
\vspace{0.2cm}

\noindent \textbf{2. Lamination.} \textit{There exist two $f$-invariant
laminations of $W^u_{loc}(N)$ and $W^s_{loc}(N)$, the leaves of
which are unstable and stable leaves $W^{uu}_{loc}(x)$ and $W^{ss}_{loc}(x)$
associated with the points of $N$, defined as follows:
$$W^{ss}_{loc}(x)=\left\{y\in \mathcal{O}\,; \lim\limits_{n\rightarrow \infty}d\left(f^{n}(y),f^{n}(x)\right)=0\right\} \text{ and }$$ $$W^{uu}_{loc}(x)=\left\{y\in \mathcal{O}\,; \lim\limits_{n\rightarrow \infty}d\left(f^{-n}(y),f^{-n}(x)\right)=0\right\}.$$ These leaves are $C^q$ and tangent
to the fibers $E^u_x$ and $E^s_x$ at each point $x$ of $N$}.



\bigskip

Note that one gets the global stable (resp. unstable) manifolds by
taking the union of the inverse (resp. direct) images of the local
ones as follows:
$$W^s(N)=\bigcup\limits_{n\in \N} f^{-n}\left(W^s_{loc}\left(N\right)\right)\,\,\text{ and }\,\,W^u(N)=\bigcup\limits_{n\in \N} f^{n}\left(W^u_{loc}\left(N\right)\right). $$ The same holds for the leaves:$$W^{ss}(x)=\bigcup\limits_{n\in \N} f^{-n}\left(W^{ss}_{loc}\left(f^{n}(x)\right)\right)\,\,\text{ and }\,\,W^{uu}(x)=\bigcup\limits_{n\in \N} f^{n}\left(W^{uu}_{loc}\left(f^{-n}(x)\right)\right). $$

These are immersed $C^q$-submanifolds of $M$. In the rest of the paper, we will drop the subscript $loc$
from the notation. The local and
the global invariant manifolds will be denoted by $W^{s,u}(N)$ since the context will always be clear. The same holds for the global and local leaves.

\begin{definition}Let $N$ be a $q$-normally hyperbolic manifold for $f$ ($q\leq l$). We say that $N$ is controllable if the following inequalities hold true
\begin{equation} \norm(Df_{|_{E^s}}).\norm(Df_{|_{TN}})<1 \,\,\,\text{ and } \,\,\,
\conorm(Df_{|_{TN}}).\conorm(Df_{|_{E^u}})>1.
\end{equation}
\end{definition}

\bigskip

We set $n_s:=$dim($E^s$), $n_u:=$dim($E^u$) and $n_0:=$dim($N$), so
that $n_0+n_s+n_u=n$.\\

\subsection{Symplectic Geometry and normal hyperbolicity}
Under symplecticity assumptions, the stable and unstable
leaves are regular with respect to the points in $N$. More precisely, we have the following proposition which will enable us in the next section to introduce a \emph{straightening} coordinate system in the vicinity of normally hyperbolic manifolds.



\begin{propositionA} \emph{\textbf{[Marco].}} Let ($M$, $\Omega$) be a smooth symplectic manifold and let $f$ be a $C^l$ symplectic diffeomorphism of $M$ ($l\geq 2$). We suppose that $N$ is a controllable $q$-normally hyperbolic manifold for $f$ ($q\leq l$). Then
\begin{itemize}
\item [-] $N$ is symplectic,
\item [-] $W^u(N)$ and $W^s(N)$ are coisotropic,
\item [-] $n_s=n_u$,
\item [-] for all $x$ $\in$ $N$, $W^{uu}(x)$ and $W^{ss}(x)$ are isotropic
and they coincide with the leaves of the characteristic foliations
of $W^u(N)$ and $W^s(N)$.
\end{itemize}
\end{propositionA}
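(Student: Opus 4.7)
The strategy rests on the identity $\Omega(v_1, v_2) = \Omega(Df^n v_1, Df^n v_2)$ valid for every $n \in \Z$ by symplecticity of $f$: if both arguments lie in subspaces on which $Df^{\pm n}$ contracts sufficiently, then $\Omega(v_1, v_2) = 0$. All four conclusions will follow from such vanishings combined with elementary symplectic linear algebra.

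I would first show isotropy. For $u_1, u_2 \in E^u_x$, iterating backwards gives $\|Df^{-n}u_i\| \leq \conorm(Df_{|_{E^u}})^{-n}\|u_i\|$, hence $|\Omega(u_1,u_2)| \leq \|\Omega\|_\infty \conorm(Df_{|_{E^u}})^{-2n}\|u_1\|\|u_2\| \to 0$ since $\conorm(Df_{|_{E^u}}) > 1$; so $E^u_x$ is isotropic. The same argument along the $f$-invariant lamination (using $Df(T_yW^{uu}(x)) = T_{f(y)}W^{uu}(f(x))$ and the hyperbolic bounds along leaves) shows that each leaf $W^{uu}(x)$ is isotropic at every point. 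The stable case is symmetric.

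Next I would establish the orthogonalities $E^u_x \perp_\Omega T_xN$ and $E^s_x \perp_\Omega T_xN$. For $u \in E^u_x$, $t \in T_xN$,
\[
|\Omega(u,t)| \leq \|\Omega\|_\infty \conorm(Df_{|_{E^u}})^{-n}\conorm(Df_{|_{TN}})^{-n}\|u\|\|t\|
\]
tends to $0$ by the controllability inequality $\conorm(Df_{|_{E^u}})\conorm(Df_{|_{TN}}) > 1$; the stable case uses forward iteration and $\norm(Df_{|_{E^s}})\norm(Df_{|_{TN}}) < 1$. A vector $t \in T_xN$ that annihilates $T_xN$ under $\Omega$ would then annihilate all of $T_xM = T_xN \oplus E^s_x \oplus E^u_x$, forcing $t = 0$; so $N$ is symplectic. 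Consequently $(T_xN)^\Omega = E^s_x \oplus E^u_x$ by dimension count, $\Omega$ restricts symplectically there, and both $E^s_x$ and $E^u_x$ are isotropic in it. The pairing $E^s_x \times E^u_x \to \R$, $(v,u) \mapsto \Omega(v,u)$, is non-degenerate on either side (a vector in its kernel lies in the $\Omega$-radical of $E^s_x \oplus E^u_x$, hence is zero), which forces $n_s = n_u$.

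For the coisotropy of $W^u(N)$ and the identification of its characteristic leaves with $W^{uu}(x)$, the case $x \in N$ is immediate: $T_xW^u(N) = T_xN \oplus E^u_x$, and the preceding facts give $(T_xW^u(N))^\Omega = E^u_x = T_xW^{uu}(x)$. To extend to $y \in W^{uu}(x)$ off $N$, take $u \in T_yW^{uu}(x)$ and $v \in T_yW^u(N)$: backward iteration brings $f^{-n}(y)$ arbitrarily close to $N$, while $\|Df^{-n}u\|$ contracts like $\conorm(Df_{|_{E^u}})^{-n}$ (hyperbolicity along leaves) and $\|Df^{-n}v\|$ grows at most like $\max(\conorm(Df_{|_{E^u}})^{-n},\conorm(Df_{|_{TN}})^{-n})$ since $T_{f^{-n}y}W^u(N)$ is $C^0$-close to $TN \oplus E^u$ at a nearby point of $N$; the product tends to $0$ by controllability, so $\Omega(u,v) = 0$ and a dimension count yields $(T_yW^u(N))^\Omega = T_yW^{uu}(x)$. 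Points of $W^u(N) \setminus W^u_{loc}(N)$ are handled by pushing forward with $f^n$ and using that $f$ preserves $\Omega$ and the foliation. The $W^s$-side is symmetric. The main obstacle is this last extension: one must confirm that the hyperbolic estimates still control $\|Df^{-n}v\|$ when $v$ is tangent to $W^u(N)$ merely near $N$ rather than on $N$ itself, and this relies on both the $C^q$-regularity of $W^u(N)$ and the $f$-invariance of the unstable lamination.
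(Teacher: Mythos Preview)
The paper does not actually prove Proposition~A: immediately after stating it, the author writes ``The proof of this proposition can be found in \cite{Ma}'' and moves on. So there is nothing to compare your argument against within this paper.

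Your argument is the standard one and is essentially correct. The vanishing $\Omega(u_1,u_2)=0$ on $E^u_x$ and $E^s_x$, the orthogonalities $E^{s,u}_x\perp_\Omega T_xN$ via controllability, the symplecticity of $N$, and the equality $n_s=n_u$ through the non-degenerate pairing on $(T_xN)^\Omega=E^s_x\oplus E^u_x$ are all carried out cleanly. The identification $(T_xW^u(N))^\Omega=E^u_x$ at points $x\in N$ is correct, and your dimension argument at $y\notin N$ is the right one: once $T_yW^{uu}(x)\subset (T_yW^u(N))^\Omega$ is known, equality follows because both have dimension $n_u=p$.

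The point you flag as the ``main obstacle'' is the only place where a reader might want more. Your bound $\|Df^{-n}v\|\lesssim \conorm(Df_{|_{TN}})^{-n}\|v\|$ for $v\in T_yW^u(N)$ with $y$ near $N$ is morally right but not literally immediate, since $T_yW^u(N)$ is not split as $TN\oplus E^u$ off $N$. One clean way to close this is: first iterate backwards a fixed number of times so that $f^{-m}(y)$ lies in an arbitrarily thin tubular neighborhood of $N$; there the $C^1$-closeness of $T_{f^{-m}(y)}W^u(N)$ to $T_{x'}N\oplus E^u_{x'}$ (from the $C^q$ regularity of $W^u(N)$) lets you transfer the bundle estimates with a multiplicative constant independent of $n$. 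Alternatively, since $f$ preserves both $\Omega$ and the strong-unstable lamination, it preserves the characteristic foliation of $W^u(N)$; the two foliations agree along $N$, and invariance plus the fact that every backward orbit in $W^u_{loc}(N)$ accumulates on $N$ forces them to agree everywhere. Either route completes the argument.
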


The proof of this proposition can be found in \cite{Ma}. Since the
leaves of the characteristic foliations coincide with the leaves
$W^{uu}(x)$ and $W^{ss}(x)$, the latter are $C^{q-1}$ with respect
to $x$. We get then the regularity we need for Proposition B below.

\subsection{Straightening neighborhood and convention}
Under the assumptions of Proposition A, one can find in the
vicinity of a normally hyperbolic manifold a
neighborhood in which the invariant manifolds and the leaves are
straightened, making it easier to depict the behavior of $f$. More
precisely, we have the following proposition.

\begin{propositionB}\emph{\textbf{[Tubular neighborhood and straightening].}}\label{tub}
Let $M$, $N$ and $f$ be as in Proposition~A with $l\geq 3$. Let $p:=n_s=n_u$. We suppose that $N$ is $3$-normally hyperbolic for $f$ and that
its stable and unstable bundles are trivial. Then, there exist a neighborhood $U$
of $N$ in $M$ and a $C^2$-diffeomorphism $\varphi : U \longrightarrow
V:=N\times B^p\times B^p$, where $B^p$ is an open ball centered at
$0$ in $\R^{p}$, such that for all $x$ $\in$ $N$:

\begin{enumerate}
\item $\varphi(x)=(x,0,0)$,
\item $\widetilde{W}^s(N):=\varphi(W^s(N)\cap U)=\{(x,s,u) \in V\,;\, u=0\}$,
\item $\widetilde{W}^u(N):=\varphi(W^u(N)\cap U)=\{(x,s,u) \in V\,;\, s=0\}$,
\item $\widetilde{W}^{ss}(x):=\varphi(W^{ss}(x)\cap U)=\{(x,s,0)\,;\, s \in B^p\}$,
\item $\widetilde{W}^{uu}(x):=\varphi(W^{uu}(x)\cap U)=\{(x,0,u)\,;\, u
\in B^p\}$.
\end{enumerate}

\end{propositionB}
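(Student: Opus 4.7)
My plan is to construct $\varphi^{-1}$ in two stages: first straighten the strong foliations inside $W^s(N)$ and $W^u(N)$ separately, then glue the two partial straightenings into a parametrization of a neighborhood of $N$ in $M$ via an exponential/parallel-transport construction.

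For the first stage, by Proposition~A the strong leaves $W^{ss}(x)$ and $W^{uu}(x)$ coincide with the characteristic leaves of the coisotropic manifolds $W^s(N)$ and $W^u(N)$, so they are $C^q$ and depend $C^{q-1}$ (i.e.\ $C^2$) on $x\in N$; in particular, $E^s$ and $E^u$, viewed as the fibrewise tangent bundles of these foliations along $N$, are $C^{q-1}$ subbundles of $T_NM$. Triviality of $E^s$ lets me fix a $C^{q-1}$ global trivialization $\sigma^s : N\times\R^p\to E^s$, and I define
\[
\Phi^s(x,s) := \exp^{ss}_x\bigl(\sigma^s(x)\cdot s\bigr),
\]
where $\exp^{ss}_x$ is the Riemannian exponential at $x$ along $W^{ss}(x)$ for the induced metric. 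Then $\Phi^s$ is $C^2$, satisfies $\Phi^s(x,0)=x$, the map $s\mapsto\Phi^s(x,s)$ parametrizes $W^{ss}(x)$, and $D\Phi^s(x,0)=\mathrm{id}_{T_xN}\oplus\sigma^s(x)$ is invertible; by compactness of $N$, after shrinking $B^p$ it becomes a $C^2$-diffeomorphism from $N\times B^p$ onto a neighborhood of $N$ in $W^s(N)$. I construct $\Phi^u:N\times B^p\to W^u(N)$ analogously.

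For the second stage, fix an auxiliary Riemannian metric on $M$ and write $T(x,s):T_xM\to T_{\Phi^s(x,s)}M$ for parallel transport along the curve $t\mapsto\Phi^s(x,ts)$. Set
\[
\Phi(x,s,u) := \exp_{\Phi^s(x,s)}\bigl(T(x,s)\cdot\exp^{-1}_x\bigl(\Phi^u(x,u)\bigr)\bigr).
\]
Direct substitution gives $\Phi(x,0,0)=x$, $\Phi(x,s,0)=\Phi^s(x,s)$ and $\Phi(x,0,u)=\Phi^u(x,u)$, which yields conditions~(1), (4), (5) together with the inclusions $\{u=0\}\subset\Phi^{-1}(W^s(N))$ and $\{s=0\}\subset\Phi^{-1}(W^u(N))$. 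The differential $D\Phi(x,0,0)$ splits as $\mathrm{id}_{T_xN}\oplus\sigma^s(x)\oplus\sigma^u(x)$ into the normal-hyperbolicity decomposition $T_NM=TN\oplus E^s\oplus E^u$, hence is invertible; compactness of $N$ then lets me shrink $B^p$ so that $\Phi$ is a global $C^2$-diffeomorphism onto a neighborhood $U$ of $N$.

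The equalities in (2), (3) now follow by a dimension argument: $\Phi^{-1}(W^s(N)\cap U)$ is a connected $C^2$-submanifold of $N\times B^p\times B^p$ of dimension $n_0+p$ that contains the dimension-$(n_0+p)$ set $N\times B^p\times\{0\}$, so the two coincide after possibly further shrinking $B^p$; the same holds for $W^u(N)$. Setting $\varphi:=\Phi^{-1}$ gives the required chart. The main obstacle I expect is precisely the gluing in stage two: producing a single $C^2$-chart that simultaneously agrees with $\Phi^s$ on $\{u=0\}$ and with $\Phi^u$ on $\{s=0\}$. The exponential/parallel-transport formula above is one explicit realization; any other $C^2$ tubular neighborhood adapted to the transverse pair $(W^s(N),W^u(N))$ would do equally well.
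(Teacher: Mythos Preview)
The paper does not actually prove Proposition~B: it only remarks that a tubular neighborhood always exists, that $W^{s,u}(N)$ straighten via their graph property, and that the strong leaves then straighten ``the same way'', citing \cite{HPS} and \cite{LMS}. Your two-stage plan---parametrize the strong foliations inside $W^{s,u}(N)$ first, then glue into a chart on $M$---is a reasonable explicit realization of that sketch, and the overall strategy is sound.

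There is, however, a regularity gap in the construction as written. The leafwise exponential $\exp^{ss}_x$ on a $C^3$ leaf with the induced metric is a priori only $C^1$: the induced metric is $C^2$, its Christoffel symbols $C^1$, hence the geodesic flow $C^1$. Likewise, parallel transport $T(x,s)$ along the merely $C^2$ curve $t\mapsto\Phi^s(x,ts)$ costs a derivative. So your $\Phi$ need not be the $C^2$ diffeomorphism the proposition claims---and $C^2$ is genuinely used later in the paper (bounds on $D^2F$ in Proposition~\ref{vois}). A safe fix for stage one: by Proposition~A the strong-stable distribution is the characteristic distribution of the $C^3$ coisotropic $W^s(N)$, hence a $C^2$ subbundle of $TW^s(N)$; pick a global $C^2$ frame $e_1,\dots,e_p$ for it (triviality of $E^s$) and set $\Phi^s(x,s)=\phi^1_{s_1}\circ\cdots\circ\phi^p_{s_p}(x)$ using the $C^2$ flows of the $e_i$. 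For stage two it is cleanest to follow the paper's hint and first straighten the transverse $C^3$ pair $W^s(N),W^u(N)$ simultaneously in one tubular chart (graph property), and only then rectify the two $C^2$ foliations inside each factor; this avoids the parallel-transport trick altogether. Finally, your ``dimension argument'' for (2)--(3) needs one extra sentence: containment of one $(n_0{+}p)$-manifold in another only gives openness, not equality. What closes the argument is that $W^s(N)$, being tangent to $TN\oplus E^s$ along $N$, pulls back under $\Phi$ to a graph $\{u=g(x,s)\}$ near $N$, and your inclusion $N\times B^p\times\{0\}\subset\Phi^{-1}(W^s(N))$ then forces $g\equiv 0$.
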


The proof is straightforward once Proposition A is known. We will
not prove Proposition B, we will content ourselves with the
following few remarks. Near $N$, one can always find a tubular
neighborhood. The straightening of the invariant manifolds is an
immediate consequence of the graph property. We refer to \cite{LMS}
and \cite{HPS} for details. When $f$ is symplectic, the strongly
stable/unstable leaves are straightened the same way.

\begin{figure}[h]
\centering
\def\JPicScale{0.6}
\input{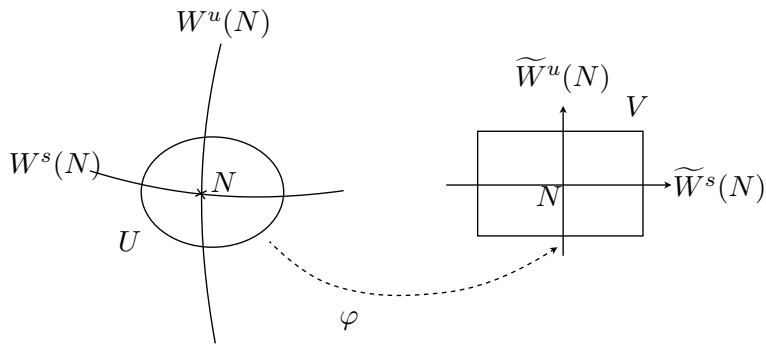}
\caption{The straightening neighborhood}
\end{figure}

\bigskip

\noindent{\bf Convention.} The first $B^p$ and the second $B^p$ in $N\times B^p\times B^p$ do
not play the same role since the first one is the stable direction while the second
one is the unstable direction. In order to distinguish them from one another
when we want to use them separately, we will add $u$
and $s$ in the notation as follows

\begin{equation}\label{p}
N\times B_s^p\times B_u^p.
\end{equation}
\noindent We use the same convention for $N\times \R_s^p\times \R_u^p$. 

In the rest of the paper, we will identify $N$ with $\varphi(N)=N\times\{0\}\times\{0\}$ for notational symplicity. This will not lead to confusion since the context will always be clear enough.

 Let $d$ be the distance associated with the Riemannian metric on $M$. We will equip the neighborhood $V$ defined in Proposition~B with the distance given by the sup of $d_{\mid_N}$ and the Euclidian distance on $\R^{2p}$. It is equivalent to the image under $\varphi$
of $d$ since $V$ is relatively compact. 

We will use the usual operator norms for the linear applications
defined on Banach spaces that we will deal with throughout the
paper. We will equip the product spaces with the sup norm and the
subsets with the induced norm. For notational simplicity, we will
denote all our norms by the same symbol $\|.\|$; the context will
always be clear enough to avoid ambiguities.

To prove our results, we will use compositions of linear applications defined on the tangent spaces of some suitable manifolds. They will be normed algebras for the induced norm.

\section{A $\lambda$-lemma for normally hyperbolic manifolds}\label{jfk}

In this section, we prove a  $\lambda$-lemma for normally hyperbolic manifolds. From now on, we suppose that $f$, $M$, $N$ and $\d$ are as follows:
\begin{itemize}
\item ($M$, $\Omega$) is a smooth symplectic Riemannian manifold,
\item $f:M\longrightarrow M$ is
a symplectic diffeomorphism of class $C^l$ ($l\geq 3$),
\item $N$ is a smooth submanifold of $M$, compact and boundaryless,
\item $N$ is a controllable 3-normally hyperbolic manifold for
$f$,
\item $n_s=n_u=p$,
\item  $N$ has trivial stable and unstable bundles,
\item $\d$ is a $C^1$-submanifold of $M$ of dimension $p$ which
transversely intersects $W^s(N)$ at some point $a$.  
\end{itemize}

We will state two versions of the $\lambda$-lemma. In Section~\ref{not}, we will use the straightening neighborhood given in Section~\ref{tub} to set out a simplified version of the $\lambda$-lemma (Theorem~1) and to properly define the notion of $C^1$-convergence. Then, in Section~\ref{inKbis}, we state the $\lambda$-lemma in a more general context (Theorem~2). We devote Sections~\ref{inV} and \ref{inK} to the proofs of these theorems.



\subsection{Theorem  1: in the straightening neighborhood}\label{not}

In this section, we state the $\lambda$-lemma in the straightening neighborhood. Let us start with fixing the notation. We keep the notation of Proposition B. We will restrict our
diffeomorphism $\varphi$ to the open set $\mathcal{U}:=U\cap
f^{-1}(U)$, so that $F=\varphi \circ f \circ \varphi^{-1}$ is well
defined on $\mathcal{V}:=\varphi(\mathcal{U})\subset V$ with values
in $V$. A point in $V$ will be written as a triple $(x,s,u)$ and
$F$ as $(F_x, F_s, F_u)$ according to the splitting $V=N\times
B_s^p\times B_u^p$. Up to iterating $\d$ if necessary (and resetting the counters), we can suppose that $a$ $\in$ $\mathcal{U}$ without loss of generality, since we are interested in the behavior of $\d$ after a large number of iterations.


We introduce the projection $\Pi_N:\widetilde{W}^s(N)\longrightarrow
N$ that sends each $(x,s,0)$ to $(x,0,0)$. Let $P:=\varphi(a)=(x,s,0)$ be the
intersection point of $\varphi(\d \cap \mathcal{U})$ and $\widetilde{W}^s(N)$.
We set $P_0:=\Pi_N(P)$. For $n\geq1$, we denote by $P^n=F^n(P)$, and
$P^n_0:=\Pi_N(P^n)=F_{\mid_N}^n(P_0)$ which is the point in $N$ such that $P^n \in
\widetilde{W}^{ss}(P^n_0)$ (see Figure \ref{redressement}). We
denote by $\widetilde{\d}$ the connected component of $\varphi(\d
\cap \mathcal{U})$ in $\mathcal{V}$ containing $P$. For all $n \in
\N $, we denote by 
$\widetilde{\d}^{n+1}$ the connected component of $F(\widetilde{\d}^n)\cap\mathcal{V}$ containing $P^n$ (where
$\widetilde{\d}^0=\widetilde{\d}$).

\begin{definition}\emph{\textbf{[The graph property].}}\label{graph}
Let $\Lambda$ be a $C^1$-submanifold of $N\times\R_s^p\times\R_u^p$.
Let $B$ be an open ball in $\R_u^p$. We say that $\Lambda$ has the
\emph{graph property over $B$}, or equivalently that $\Lambda$ is a \emph{graph
over $B$}, if there exists a $C^1$-map $\varpi: B\rightarrow N\times
\R_s^{p}$ such that $\Lambda=\{(\varpi(u),u);u\in B\}$.

\end{definition}

\bigskip

For $\delta$ small enough, we set $B_{\delta}:=\{u\in B_u^p\,;\, \|
u\|<\delta\}$ and $D_{\delta}:=\{(x,s,u) \in \mathcal{V}\,;\, u \in
B_{\delta}\}$.  For $n$ $\in$ $\N$, we introduce the constant map
$$\begin{array}[t]{cccc}\ell_n: & B_{\delta}& \longrightarrow &
N\times B_s^p\\ &u &\longmapsto& (P^n_0,0)
\end{array}$$so that clearly $\widetilde{W}^{uu}(P_0^n)\cap D_{\delta}$
is the graph of $\ell_n$, for all $n$ $\in$
$\N$.

\begin{figure}[h]
\centering
\def\JPicScale{0.8}
\input{graphproperty2.pst}
\caption{Graphs}
\end{figure}


The $\lambda$-lemma in $V$ takes the following form.

\bigskip

\noindent\textbf{Theorem 1}. \label{thmV} \textit{For all $n$ $\in$
$\N$, let $\widetilde{\d}^n$ and $\ell_n$ be as above. Then, there
exists $\delta>0$ such that for all $n$ $\in$ $\N$, there exists a
$C^1$-map $\xi_n : B_{\delta}\rightarrow N\times B_s^p$ such that
$\widehat{\d}^n:=\widetilde{\d}^n\cap D_{\delta}$ is the graph of
$\xi_n$. Moreover,}
$$\lim\limits_{n\rightarrow
\infty}d_{C^1}(\xi_n,\ell_n)=0,$$where $d_{C^1}(\xi_n,\ell_n)=\sup\limits_{u \in B_{\delta}}\big(d(\xi_n(u),\ell_n(u))+\|\xi'_{n}(u)-\ell'_n(u)\|\big)$.

\bigskip

\begin{figure}[h]
\centering
\def\JPicScale{0.8}
\input{V4.pst} \vspace{0.7cm} \caption{Straightening of $\widetilde{\d}$}
\label{redressement}
\end{figure}






We will need $4$ steps to prove Theorem 1 in Section \ref{inV}. We
will first show how, under iteration, arbitrary tangent vectors in
$T_{P_0}\widetilde{\d}$ are straightened. We will then use the
transversality of $\widetilde{\d}$ to $\widetilde{W}^s(N)$ to prove
that some suitable part of $\widetilde{\d}$ (close to $P$) is a graph over a ball in $\R_u^{p}$. In the
third step, we will show how this graph property is preserved under
iteration \textit{over the same domain in $\R_u^{p}$}. We will
finally prove that tangent vectors along these graphs are
straightened and a simple application of the Mean Value Theorem ends
the proof of
Theorem 1.\\

We end this section with the definition of a notion of
``closeness" for graphs which will be useful in the sequel.
\begin{definition}\label{epsilunclose} We keep the notation of
Theorem 1. Let $\varepsilon>0$ and $n$ $\in$ $\N$. We say that
$\widehat{\d}^n$ and $\widetilde{W}^{uu}(P_0^n)\cap D_{\delta}$ are $C^1$
$\varepsilon$-close if\, $d_{C^1}(\xi_n,\ell_n)<\varepsilon$.

\end{definition}

\subsection{Theorem 2: in an arbitrary compact subset of
$M$}\label{inKbis} In this section, we introduce a new notion of
graphs and convergence in the $C^1$ compact open topology (in a fixed relatively compact set in
$M$).

\begin{definition} Let \,$\mathcal{U}$ and $\overline{U}$ be two neighborhoods of $N$ in $M$
such that $\overline{U}\subset\mathcal{U}$. We suppose that there
exists a $C^2$-diffeomorphism $\varphi: \mathcal{U}\longrightarrow N\times \R_s^p\times
\R_u^p$. Let $m$ $\in$ $\N$ be fixed. We set
$\psi_{(m,\overline{U})}:=f^m\circ{\varphi^{-1}}_{|_{\varphi(\overline{U})}}$.
Let $Q_1$ be a $C^1$-submanifold of $M$ contained in $W^u(N)\cap
f^m(\overline{U})$ and $Q_2$ be a $C^1$-submanifold of $M$ contained in
$f^m(\overline{U})$. We say that $Q_2$ is a $(m,\overline{U})$-graph
over $Q_1$ if $\psi_{(m,\overline{U})}^{-1}(Q_2)$ is a graph over
$\Pi_3\left(\psi_{(m,\overline{U})}^{-1}(Q_1)\right)$ in the
sense of Definition \ref{graph}, where $\Pi_3$ denotes the
projection over the third variable.

 If $\psi_{(m,\overline{U})}^{-1}(Q_2)=\graph \xi=\graph(X,S)=\{(X(u),S(u),u);u
\in \Pi_3(\psi_1^{-1}(Q_1)) \}$, we define the following distance
$$\begin{array}{ll}d_{(C^1,m,\overline{U})}\big(Q_1,Q_2\big):=&
\sup\limits_{u \in
\Pi_3\left(\psi_{(m,\overline{U})}^{-1}(Q_1)\right)}d\left(\psi_{(m,\overline{U})}\left(X(u),S(u),u\right),
 \psi_{(m,\overline{U})}(X(0),0,u)\right)+\end{array}$$
 $$\sup\limits_{ \substack {u\in
 \Pi_3(\psi_{(m,\overline{U})}^{-1}(Q_1))\\ v_1 \in B_{\R_u^{p}}}}\left\|{D\psi_{(m,\overline{U})}}\left(X(u),S(u),u\right).
 \left(X'(u).v_1,S'(u).v_1,v_1\right)-{D\psi_{(m,\overline{U})}}\left(X(0),0,u\right).\left(0,v_1\right)\right\|$$
where $B_{\R_u^{p}}$ is the unit ball in $\R_u^{p}$.
\end{definition}

\begin{figure}
\centering
\def\JPicScale{0.8}
\input{defcompactK.pst}
\caption{The $(m,\overline{U})$-graph property}
\end{figure}

We now state the global version of the $\lambda$-lemma.\\




\noindent\textbf{Theorem 2}. \textbf{[$\lambda$-lemma]}.\label{lambdalemma}\label{K} \textit{Let $f$, $M$ and $N$ be as above. Let $\d$ be a $p$-dimensional
$C^1$-submanifold transversely intersecting $W^s(N)$ at some point
$a$, and let $\d^k=f^k(\d)$, for $k\geq 1$. Let $a_0$ be the point
in $N$ such that $a \in W^{ss}(a_0)$ and set $a_0^k:=f^k(a_0)$.}

\textit{Then, there exist two neighborhoods $\mathcal{U}$ and
$\overline{U}$ of $N$ in $M$ satisfying
$\overline{U}\subset\mathcal{U}$, and a $C^2$-diffeomorphism
$\varphi: \mathcal{U}\longrightarrow N\times \R_s^p\times
\R_u^p$ such that
$\forall m \in \N$, $\forall \varepsilon
>0, \exists k_0~\in~\N$; $\forall k\geq k_0$, there exists a $C^1$-submanifold $\overline{\d}^k$ in $f^k(\d)\cap f^m(\overline{U})$ such that $\overline{\d}^k$ is a
$(m,\overline{U})$-graph over $W^{uu}(a_0^k)\cap f^m(\overline{U})$.
Moreover,}
$$d_{(C^1,m,\overline{U})}\left(\overline{\d}^k,W^{uu}(a_0^k)\cap f^m(\overline{U})\right)<\varepsilon.$$


We devote Section \ref{inK} to the proof of Theorem 2. It will be a
direct consequence of the proof of Theorem 1.\\

\noindent\emph{\textbf{Comments.}} Theorem 2 actually states the
straightening property in any relatively compact set $K$ (with a
non-empty interior) in $M$ intersecting all the unstable leaves of the submanifold $N$. More precisely, let $K$ be such a set. The sequence
$(f^m(\overline{U})\cap W^u(N))_{m\in \N}$ is clearly an exhaustion of $W^u(N)$
by relatively compact sets. By definition of the unstable manifold,
there exists an integer $m_0$ such that $W^u(N)\cap K \subset
f^{m_0}(\overline{U})$. Then, one can easily prove that for all
$\varepsilon
>0$, there exists $k_0 \in \N$ such that for all $k\geq k_0$, there exists a submanifold $\underline{\d}^k$ in $f^k(\d)\cap
K$ such that $\underline{\d}^k$ is a $(m_0,\overline{U})$-graph over
$W^{uu}(a_0^k)\cap K$. Moreover,
$$d_{(C^1,m_0,\overline{U})}\left(\underline{\d}^k,W^{uu}(a_0^k)\cap K\right)<\varepsilon.$$

Note that the convergence given by the basic $\lambda$-lemma is
stronger than the Hausdorff one, for $\Delta$ and for its tangent
space as well.

\section{Proof of Theorem 1}\label{inV}
In this section, we prove Theorem~1.

\subsection{General assumptions for Theorem 1}
Here we
keep the notation of Proposition B and of Section \ref{not} and we limit ourselves to the behavior of $F$ in $\mathcal{V}$. Recall
that $\mathcal{V}\subset V=N\times B_s^p\times B_u^p$, where
$B_{s,u}^p$ is an open ball centered at $0$ in $\R^{p}$. Let
$B_{s,u}^p$ be of radius $\varsigma$.

Since $\widetilde{W}^{s,u}(N)$ are invariant under $F$,
then

\begin{equation}\label{Fu}\forall \ x \in N, \,\,\forall \ s \in B_s^p, \,\,\ F_u(x,s,0)=0,\end{equation}
\begin{equation}\label{Fs}\forall \ x \in N, \,\,\forall \ u \in B_u^p,\ \,\, F_s(x,0,u)=0.\end{equation}
In addition, $\forall \ x \in N$, $F(x,0,0)=(F_x(x,0,0),0,0)$. Since the
stable and unstable \emph{foliations} are invariant, then for all
$(x,s,u)$ $\in$ $\mathcal{V}$,
\begin{equation}\label{Fx}F_x(x,0,u)=F_x(x,s,0)=F_x(x,0,0).\end{equation}


\noindent Therefore, for $X=(x,0,0)$ $\in$ $N\times\{(0,0)\}$, the
derivative $DF(X)$ at $X$ can be represented as a diagonal matrix:
\begin{equation}\label{DFdiag}DF(X)=\left(\begin{array}{ccc}
\partial_xF_x(X)& 0 &  0\\
0 & \partial_sF_s(X) & 0\\
0& 0 & \partial_uF_u(X)
\end{array}\right ).\end{equation}

\noindent The manifold $N\times\{(0,0)\}$ being normally hyperbolic
for $F$, one can find a real number $\lambda$ $\in$ $]0;1[$ such that
$\forall$ $x$ $\in$ $N$,

$$\|\partial_sF_s(x,0,0)\|<\lambda, \\\ \|(\partial_uF_u)^{-1}(x,0,0)
\|<\lambda,\\\
\|\partial_sF_s(x,0,0)\|.\|(\partial_xF_x(x,0,0))^{-1}\|<\lambda$$
$$\text{
and
}\|\partial_xF_x(x,0,0)\|.\|(\partial_uF_u(x,0,0))^{-1}\|<\lambda.$$
\bigskip

\noindent Let $Y=(x,s,0)$ be in $\widetilde{W}^{s}(N)$. Using
Equations (\ref{Fu}) and (\ref{Fx}), one easily sees that $DF(Y)$
takes the following form:

\begin{equation}\label{DFS}DF(Y)=\left(\begin{array}{ccc}
\partial_xF_x(Y)& 0 &  \partial_uF_x(Y)\\
\partial_xF_s(Y) & \partial_sF_s(Y) & \partial_uF_s(Y)\\
0& 0 & \partial_uF_u(Y)
\end{array}\right ).\end{equation}

\bigskip


\noindent One has an analogous property for the points of $\widetilde{W}^u(N)$.

 We  need to shrink $\mathcal{V}$ in order to have some
estimates useful later on. Note first that $\mathcal{V}$ can be
chosen so that $\partial_xF_x(Z)$ and $\partial_uF_u(Z)$ are
invertible for all $Z$ $\in$ $\mathcal{V}$.\\

 Let $\overline{\lambda}$ be in $]\lambda;1[$. For
simplicity, we choose $\overline{\lambda}=\frac{1+\lambda}2$.
However, all the calculations in this proof can be adjusted so that
they are compatible with any value of $\overline{\lambda}$ $\in$
$]\lambda;1[$. Recall that $B_{s,u}^p$ is of radius $\varsigma$.
\begin{prop}\label{vois}
For $\varsigma$ small enough, there exist real positive constants
$C_1$ and $C_2$ such that for all $Z=(x,s,u)$ $\in$ $\mathcal{V}$,
the following inequalities hold true

\begin{enumerate}

\item $\|s\|<\frac{5-5\lambda}{2C_2(11+\lambda)}$,

\item $\|DF(Z)\|\leq C_1$ and $\|D^2F(Z)\|\leq C_2$,

\item $\|\partial_sF_s(Z)\|<\overline{\lambda}$ and $\|[\partial_uF_u(Z)]^{-1}\|<\overline{\lambda}$,

\item $\|\partial_xF_x(Z)\|.\|[\partial_uF_u(Z)]^{-1}\|<\overline{\lambda}$,

\item
$\max\big(\|\partial_sF_x(Z)\|,\|\partial_xF_s(Z)\|\big)<\frac{5-5\lambda}{2(11+\lambda)}$.

\end{enumerate}

\end{prop}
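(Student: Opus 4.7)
The strategy is to observe that every estimate in items (2)--(5) either holds on $N\times\{(0,0)\}$ with a strict constant $\lambda<\overline{\lambda}$ (by normal hyperbolicity, controllability, and the block-diagonal form (\ref{DFdiag})), or involves a continuous function that is bounded on any relatively compact set. Together with the compactness of $N$, a straightforward ``tube lemma'' type argument then extends the strict inequalities from $N\times\{(0,0)\}$ to a tubular neighborhood of uniform thickness, and the smallness condition (1) is obtained at the end by further shrinking $\varsigma$.

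More precisely, I would first fix once and for all a relatively compact open set $\mathcal{V}_0\subset N\times\R_s^p\times\R_u^p$ containing the closure of $N\times B^p\times B^p$ (with the initial radius given by Proposition~B). Since $F$ is $C^l$ with $l\geq 3$, both $\|DF\|$ and $\|D^2F\|$ are continuous on $\overline{\mathcal{V}_0}$ (which is compact because $N$ is compact), and hence attain finite upper bounds $C_1$ and $C_2$. These constants depend only on $\mathcal{V}_0$, and any further shrinking of the neighborhood will only improve them; in particular, (2) will remain valid for every choice of $\varsigma$ we subsequently make.

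Next I would deal with items (3), (4) and (5). For each point $x_0\in N$, the explicit form of $DF$ at $(x_0,0,0)$ given by (\ref{DFdiag}) together with the normal-hyperbolicity and controllability estimates gives strict inequalities
\[
\|\partial_s F_s(x_0,0,0)\|<\lambda<\overline{\lambda},\qquad \|[\partial_u F_u(x_0,0,0)]^{-1}\|<\lambda<\overline{\lambda},
\]
\[
\|\partial_x F_x(x_0,0,0)\|\cdot\|[\partial_u F_u(x_0,0,0)]^{-1}\|<\lambda<\overline{\lambda},\qquad \partial_s F_x(x_0,0,0)=\partial_x F_s(x_0,0,0)=0.
\]
Each of the functions involved is continuous in $Z$ (for the inverses, use that invertibility is an open condition, which also gives the invertibility statement preceding the proposition), so for each $x_0$ there is an open neighborhood of $(x_0,0,0)$ in $\mathcal{V}_0$ on which all the inequalities listed in (3)--(5) hold simultaneously. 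Covering the compact slice $N\times\{(0,0)\}$ by finitely many such neighborhoods and using the standard tube-lemma argument, one obtains a uniform radius $\varsigma_1>0$ such that every $Z\in N\times B^p(0,\varsigma_1)\times B^p(0,\varsigma_1)$ satisfies (3), (4) and (5).

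Finally, for item (1), I would set
\[
\varsigma:=\min\!\left(\varsigma_1,\;\tfrac{5-5\lambda}{2C_2(11+\lambda)}\right),
\]
and redefine $\mathcal{V}$ as the corresponding smaller neighborhood $N\times B_s^p\times B_u^p$ of radius $\varsigma$. Every $Z=(x,s,u)\in\mathcal{V}$ then satisfies $\|s\|<\varsigma\leq \frac{5-5\lambda}{2C_2(11+\lambda)}$, giving (1), while (2)--(5) remain valid from the previous steps. The only mild subtlety, and the main book-keeping point, is the order of the choices: one must freeze $C_1,C_2$ on a larger neighborhood before using $C_2$ to constrain $\varsigma$ in~(1), since otherwise one would be trying to pick $\varsigma$ in terms of a constant that itself depends on $\varsigma$.
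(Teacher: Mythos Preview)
Your proposal is correct and follows essentially the same approach as the paper: the paper's proof simply says it is immediate because $F$ is at least $C^2$ and $\mathcal{V}$ is relatively compact, noting that item~(5) uses the block-diagonal form~(\ref{DFdiag}). Your write-up is a faithful and careful expansion of exactly this continuity-plus-compactness argument, including the bookkeeping point about fixing $C_2$ before using it to bound~$\varsigma$.
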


\begin{proof}

 The proof is immediate because $F$ is at least $C^2$ and $\mathcal{V}$ is relatively compact. Note that the last item is immediate thanks to the form of $DF$ in Equation~(\ref{DFdiag}).
\end{proof}


\subsection{Linear straightening of $T_{P^m}\widetilde{\d}^m$}\label{step1}
The following proposition states the straightening of the tangent
space of $\widetilde{\d}$ at its base point, under iteration by $F$.
\begin{prop}\label{etape1} For all $m$ $\in$ $\N$, the tangent space $T_{P^m}\widetilde{\d}^m$ is the graph of a linear map
$L_m=(B_m,C_m): \R_u^{p} \longrightarrow T_{P_0^m}N\times\R_s^{p}$,
whose norm satisfies:
$$\lim\limits_{m\rightarrow
\infty}\|L_m\|=0. $$
\end{prop}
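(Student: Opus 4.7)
\bigskip
\noindent\textit{Proof plan.}

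\textbf{Base case via transversality.} First I would establish the graph property at $m=0$. Since $\widetilde{\d}$ transversely meets $\widetilde{W}^s(N)$ at $P$ and $\dim\widetilde{\d}=p=\operatorname{codim}\widetilde{W}^s(N)$, the tangent space $T_P\widetilde{\d}$ is a $p$-dimensional supplement to $T_P\widetilde{W}^s(N)=T_{P_0}N\times\R^p_s\times\{0\}$ inside $T_PV=T_{P_0}N\times\R^p_s\times\R^p_u$. Consequently the projection $\Pi_3:T_P\widetilde{\d}\to\R^p_u$ is a linear isomorphism, and composing its inverse with the projection onto the first two factors produces the desired linear map $L_0=(B_0,C_0):\R^p_u\to T_{P_0}N\times\R^p_s$ whose graph is $T_P\widetilde{\d}$.

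\textbf{Iteration and recursion.} Next I would exploit that $\widetilde{W}^s(N)$ is $F$-invariant, so $P^m=F^m(P)$ stays in $\widetilde{W}^s(N)$ for every $m$. Hence the derivative $DF(P^m)$ has the block form~(\ref{DFS}). Applying this matrix to an arbitrary tangent vector $(B_mv,C_mv,v)$ of $T_{P^m}\widetilde{\d}^m$ and using the invertibility of $\partial_uF_u$ on $\mathcal{V}$ (guaranteed by the choice of $\mathcal{V}$ in Proposition~\ref{vois}) to reparametrize by $v'=\partial_uF_u(P^m)v$, one finds that $T_{P^{m+1}}\widetilde{\d}^{m+1}$ is again the graph of a linear map $L_{m+1}=(B_{m+1},C_{m+1})$ given by the affine recursion
\begin{align*}
B_{m+1} &= \partial_xF_x(P^m)\,B_m\,\bigl[\partial_uF_u(P^m)\bigr]^{-1} + \partial_uF_x(P^m)\,\bigl[\partial_uF_u(P^m)\bigr]^{-1},\\
C_{m+1} &= \partial_xF_s(P^m)\,B_m\,\bigl[\partial_uF_u(P^m)\bigr]^{-1} + \partial_sF_s(P^m)\,C_m\,\bigl[\partial_uF_u(P^m)\bigr]^{-1} + \partial_uF_s(P^m)\,\bigl[\partial_uF_u(P^m)\bigr]^{-1}.
\end{align*}
This simultaneously proves the graph property by induction and produces the recursion needed for the norm estimate.

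\textbf{Convergence of the norms.} To conclude, I would observe two facts. On the one hand, Proposition~\ref{vois} bounds the linear parts: $\|\partial_xF_x(P^m)\|\cdot\|[\partial_uF_u(P^m)]^{-1}\|<\overline{\lambda}$ and $\|\partial_sF_s(P^m)\|\cdot\|[\partial_uF_u(P^m)]^{-1}\|<\overline{\lambda}^2$. On the other hand, comparing~(\ref{DFdiag}) with~(\ref{DFS}) shows that the matrix entries $\partial_uF_x$, $\partial_xF_s$, $\partial_uF_s$ vanish identically on $N$; being continuous on the compact set $\overline{\mathcal{V}}\cap\widetilde{W}^s(N)$, they tend to $0$ as the $s$-coordinate of $P^m$ goes to $0$. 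Since $P\in\widetilde{W}^s(N)$ implies $d(P^m,N)\to 0$, all three terms $\partial_uF_x(P^m),\partial_xF_s(P^m),\partial_uF_s(P^m)$ converge to $0$. Injecting these estimates into the recursion yields
\[
\|B_{m+1}\|\leq \overline{\lambda}\|B_m\|+\varepsilon_m,\qquad \|C_{m+1}\|\leq \overline{\lambda}^2\|C_m\|+\eta_m\bigl(\|B_m\|\bigr),
\]
with $\varepsilon_m\to0$ and $\eta_m(\|B_m\|)\to0$. A standard argument for affine contractions with vanishing inhomogeneous term then gives $\|B_m\|\to 0$, and feeding this back into the $C$-recursion gives $\|C_m\|\to 0$, hence $\|L_m\|\to 0$.

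\textbf{Main obstacle.} The only delicate point is recognizing precisely which off-diagonal blocks of $DF(P^m)$ are zero and which merely tend to zero: the entries forced to vanish on all of $\widetilde{W}^s(N)$ by equations (\ref{Fu})--(\ref{Fx}) account for the triangular shape in (\ref{DFS}), while the remaining nontrivial blocks $\partial_uF_x$, $\partial_xF_s$, $\partial_uF_s$ vanish only on $N$ itself and therefore enter the argument through the continuity-plus-convergence $P^m\to N$ rather than from an algebraic identity.
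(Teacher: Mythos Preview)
Your proposal is correct and follows essentially the same route as the paper: transversality gives the graph at $m=0$, the block form~(\ref{DFS}) of $DF$ along $\widetilde{W}^s(N)$ yields the same affine recursion for $(B_m,C_m)$, and one concludes by showing the inhomogeneous terms die out. The only (minor) difference is in the convergence step: the paper uses the Mean Value Theorem and the exponential decay $\|s_m\|\leq\overline{\lambda}^m\|s\|$ to obtain explicit rates $\|\partial_uF_x(P^m)\|,\|\partial_uF_s(P^m)\|\leq C_2\overline{\lambda}^m\|s\|$, then sums the resulting geometric-type series (invoking a Cauchy-product argument for $(c_m)$), whereas you invoke only continuity and the qualitative fact $P^m\to N$ together with the standard lemma on affine contractions with vanishing forcing. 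Your version is slightly cleaner and avoids the need to show $\sum b_m<\infty$; the paper's version, in return, gives quantitative decay rates that are reused later in the article.
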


\begin{proof}


We start with a quick study of the dynamics in $\widetilde{W}^s(N)$.
Recall that $P=(x,s,0)$ is the intersection point of $\widetilde{\d}$ and $\widetilde{W}^s(N)$. Note first that by Proposition~\ref{vois},
$\|\partial_sF_s(P^i)\|<\overline{\lambda}$, for all
$i\geq0$. For $i\geq1$, we set $s_i:=F_s(P^{i-1})$. Then by the Mean
Value Theorem, one gets $\|s_i\|\leq \overline{\lambda}\|s_{i-1}\|$,
and thus under iteration $\|s_i\|\leq \overline{\lambda}^i\|s\|$,
that tends to $0$ with an exponential speed.\\

\noindent $\bullet$ We will see now where the graph property appears. By
transversality of $\widetilde{\d}$ and $\widetilde{W}^s(N)$, and since $\dim \widetilde{\d}=p$,
$T_P\widetilde{\d}$ is the graph of a linear map defined on
$\R_u^{p}$, with values in $T_{P_0}N\times\R_s^{p}$. More precisely,
recall that $\Pi_3 : N \times B_s^p\times B_u^p \longrightarrow
B_u^p$ is the projection over the third variable. By transversality,
$D{\Pi_3}_{\mid_{\widetilde{\d}}}(P)$ is an isomorphism between
$T_P\widetilde{\d}$ and $\R_u^{p}$. Therefore, there exist two
linear maps $B$ and $C$ on $\R_u^{p}$, such that $T_P\widetilde{\d}$
is the image of the map $$(B,C,I):\R_u^{p}\longrightarrow
T_{P_0}N\times\R_s^{p}\times\R_u^{p},$$
where $I: \R_u^{p} \rightarrow \R_u^{p}$ is the identity map.\\

\noindent $\bullet$ Let us now see how the property of $T_P\widetilde{\d}$
being a graph of a linear map persists under iteration. We will
proceed by induction. However, since the calculations are similar
for all the iterates, we will content ourselves with detailing the
proof for the
first iteration.\\

 The image of $T_P\widetilde{\d}$ under $DF(P)$
is $T_{F(P)}F(\widetilde{\d})$. For notational convenience, we will
identify our linear maps with the matrices below (in the suitable
algebras of linear applications) and the partial derivatives with the blocks in the matrices. For instance,
$T_{F(P)}F(\widetilde{\d})$ is identified with the image of the
linear map
$$DF(P) . \left(\begin{array}{c} B \\ C\\ I \end{array}\right): \R_u^{p}\longrightarrow T_{P^1_0}N\times\R_s^{p}\times\R_u^{p}.$$
Since $P$ lies in $\widetilde{W}^s(N)$, this is nothing but the
image of the following map

$$ \left(\begin{array}{ccc}
\partial_xF_x(P)& 0 &  \partial_uF_x(P)\\
\partial_xF_s(P) & \partial_sF_s(P) & \partial_uF_s(P)\\
0& 0 & \partial_uF_u(P)
\end{array}\right ) \left(\begin{array}{c} B \\ C\\ I \end{array}\right)=\left(\begin{array}{c}
\partial_xF_x(P).B + \partial_uF_x(P)\\
\partial_xF_s(P).B + \partial_sF_s(P).C + \partial_uF_s(P)\\
\partial_uF_u(P)
\end{array}\right ).$$

\noindent Since $\partial_uF_u(P) : \R_u^{p}\longrightarrow
\R_u^{p}$ is invertible, $T_{F(P)}F(\widetilde{\d})$, that is, $T_{P^1}\widetilde{\d}^1$ coincides with the
image of

$$\left(\begin{array}{c}
\partial_xF_x(P).B + \partial_uF_x(P)\\
\partial_xF_s(P).B + \partial_sF_s(P).C + \partial_uF_s(P)\\
\partial_uF_u(P)
\end{array}\right ) . (\partial_uF_u(P))^{-1}=$$

$$\left(\begin{array}{c}
\partial_xF_x(P).B.(\partial_uF_u(P))^{-1} + \partial_uF_x(P).(\partial_uF_u(P))^{-1}\\
\partial_xF_s(P).B.(\partial_uF_u(P))^{-1} + \partial_sF_s(P).C.(\partial_uF_u(P))^{-1} + \partial_uF_s(P).(\partial_uF_u(P))^{-1}\\
I
\end{array}\right ).$$

\noindent This shows that $T_{P^1}\widetilde{\d}^1$ is also a graph.
It is the image of the linear map $$(B_1,C_1,I):
\R_u^{p}\longrightarrow
T_{P^1_0}N\times\R_s^{p}\times\R_u^{p},$$ where we have set\\
$$B_1=\partial_xF_x(P).B.(\partial_uF_u(P))^{-1} +
\partial_uF_x(P).(\partial_uF_u(P))^{-1},$$ and
$$C_1=\partial_xF_s(P).B.(\partial_uF_u(P))^{-1} +
\partial_sF_s(P).C.(\partial_uF_u(P))^{-1} +
\partial_uF_s(P).(\partial_uF_u(P))^{-1}.$$


\noindent Pursuing the induction, one gets $B_i$ and $C_i$ ($i>1$),
by applying $DF(P^{i-1})$ to $T_{P^{i-1}}\widetilde{\d}^{i-1}$ (which is the image of $(B_{i-1},C_{i-1},I)$), and
then normalizing by $(\partial_uF_u(P^{i-1}))^{-1}$. We set
$b_i=\|B_i\|$ and $c_i=\|C_i\|$, for $i$ $\in$ $\N$, where $B_0=B$
and $C_0=C$.

\bigskip

\noindent $\bullet$ To end the proof, it is enough now to prove that
$(b_i)$ and $(c_i)$ converge to $0$. We begin with $(b_i)$. We fix
an arbitrary $\varepsilon>0$. Proposition~\ref{vois} yields, for all
$i$ $\in$ $\N$,
$$\|\partial_xF_x(P^{i})\|.
\|(\partial_uF_u(P^i))^{-1}\|<\overline{\lambda},$$so that, since
$\|(\partial_uF_u(P^{i}))^{-1}\|<1$,
$$\begin{array}{lll}b_{i+1}&\leq &
\|\partial_xF_x(P^{i})\|.b_{i}.\|(\partial_uF_u(P^{i}))^{-1}\| +
\|\partial_uF_x(P^{i}))\|.\|(\partial_uF_u(P^i))^{-1}\|\leq\overline{\lambda}
b_{i} + \beta_{i},\end{array}$$ where we have set
$\beta_i:=\|\partial_uF_x(P^{i}))\|$. Therefore, for $n$ $\in$
$\N^*$,

$$b_n\leq \overline{\lambda}^n b_0 + \sum_{i=0}^{n-1}\overline{\lambda}^i\beta_{n-1-i}.$$

\noindent Note that we are not interested in giving the optimal expression for the convergence. Since $\overline{\lambda}<1$, then for $n$ large enough,
$\overline{\lambda}^n b_0 \leq \frac{\varepsilon}2$. On the other
hand, by the Mean Value Theorem, $\beta_i$ satisfies:
$$\beta_i \leq C_2\overline{\lambda}^i\|s\|$$since $\|\partial_uF_x(P_0^{i}))\|=0$. As a consequence of Proposition \ref{vois}, it is easy to
see that $C_2\|s\| \leq 1$. Therefore $$
\sum_{i=0}^{n-1}\overline{\lambda}^i\beta_{n-1-i}  \leq
\sum_{i=0}^{n-1} \overline{\lambda}^i \overline{\lambda}^{n-1-i}
  \leq  \sum_{i=0}^{n-1}\overline{\lambda}^{n-1}= n. \overline{\lambda}^{n-1}.$$Since $\overline{\lambda}<1$, then for $n$ large enough,
$n. \overline{\lambda}^{n-1} \leq \frac{\varepsilon}2$. Then, for
$n$ large enough, $b_n\leq\varepsilon$.\\

Note that one can also prove that the series $\sum b_i$ is convergent. This will be needed for the convergence of
$(c_i)$.\\

Let us now study the convergence of the sequence $(c_i)$. For
$i\geq0$,
$$\begin{array}{ll}c_{i+1}&\leq \|\partial_xF_s(P^i)\|.b_i.\|(\partial_uF_u(P^i))^{-1}\|
+\|\partial_sF_s(P^i)\|.c_i.\|(\partial_uF_u(P^i))^{-1}\|\\ & +
\|\partial_uF_s(P^i)\|.\|(\partial_uF_u(P^i))^{-1}\|.\end{array}$$It
is easy to see, using the Mean Value Theorem, that
$\|\partial_xF_s(P^i)\|<C_2\|s_i\|<1$, for all $i$. As we did for
$(b_i)$, we define $\gamma_i:=\|\partial_uF_s(P^i)\|$ and get
$\gamma_i\leq\overline{\lambda}^i$, following the same steps as for
$\beta_i$. Therefore, $$c_{i+1}\leq b_i + \overline{\lambda}c_i +
\overline{\lambda}^i,$$ and, for $n\geq1$,
$$c_n\leq \sum_{i=0}^{n-1}\overline{\lambda}^{(n-1-i)}b_i + \overline{\lambda}^n c_0 +
\sum_{i=0}^{n-1}\overline{\lambda}^{(n-1-i)}\overline{\lambda}^i.
$$Since $\overline{\lambda}<1$, for $n$ large enough, one gets
$\overline{\lambda}^nc_0\leq\frac{\varepsilon}3$. On the other hand,
for $n$ large enough,
$\sum_{i=0}^{n-1}\overline{\lambda}^{(n-1-i)}.\overline{\lambda}^{i}=n
\overline{\lambda}^{n-1}\leq\frac{\varepsilon}3$.
Finally, let
$s_{n-1}:=\sum_{i=0}^{n-1}\overline{\lambda}^{(n-1-i)}b_i$. Observe
that $s_n$ is the general term of the Cauchy product of the series
of general terms $b_i$ and $\overline{\lambda}^i$ respectively.
These series are both convergent, so is their Cauchy
product. Then $(s_n)$ converges to $0$. More precisely, for $n$
large enough, one has $s_{n-1}\leq \frac{\varepsilon}3$. This ends
the proof of Proposition~\ref{etape1}.
\end{proof}


\subsection{The graph property for $\widetilde{\d}$}\label{step2}

We have seen above that, because of the transversality, $D\Pi_3(P)$
restricted to $T_P\widetilde{\d}$ is an isomorphism between
$T_P\widetilde{\d}$ and $\R_u^{p}$. Then, by the Inverse Function
Theorem, there exist a neighborhood $\mathcal{O}_1$ of $P$ in
$\widetilde{\d}$ and a neighborhood $\mathcal{O}_2$ of $0$ in
$\R_u^{p}$ such that ${\Pi_3}_{\mid_{\widetilde{\d}}}$ is a
diffeomorphism from $\mathcal{O}_1$ onto $\mathcal{O}_2$. More
precisely, there exists a real number $\widetilde{\delta}>0$ such
that, if we set $B_{\widetilde{\delta}}:=\{u\in B_u^p; \|
u\|<\widetilde{\delta}\}$ and $D_{\widetilde{{}\delta}}:=\{(x,s,u)
\in \mathcal{V}; u \in B_{\widetilde{\delta}}\}$, then there exists
a $C^1$-map $\xi : B_{\widetilde{\delta}}\rightarrow N\times B_s^p$,
such that $\widetilde{\d}\cap D_{\widetilde{\delta}}$ is the graph
of $\xi$ (in the sense of Definition \ref{graph}). We set $\xi=(X,
S)$.


\subsection{The graph property for the iterates $\widetilde{\d}^n$ over a fixed strip}\label{step3}

We set $\widetilde{\nu}:=\|\xi'\|=\max(\|X'\|,\|S'\|)=\sup_{u \in
B_{\delta}}(\|\xi'(u)\|)$ and $\nu:=\max(1,\widetilde{\nu})$. We
will see later on why we choose $\nu$ (and not
just $\widetilde{\nu}$) to bound the norm of all the derivatives of
the graph maps. Let us set





\begin{equation}\label{epsilunnu}\varepsilon_{\nu}=\frac{1-\lambda}{12\nu(1+\lambda)}=\frac{1-\overline{\lambda}}{12\nu\overline{\lambda}}.\end{equation}
The reason behind this choice will be clarified later on. By uniform
continuity, and due to the form of $DF$ on $\widetilde{W}^s(N)$ (Equation~(\ref{DFS})),
there exists $\eta>0$, such that for all $(x,s,u)$ $\in$
$\mathcal{V}$, if $\|u\|<\eta$, then

\begin{equation}\label{{epsilunnu'}} \|\partial_xF_u(x,s,u)\|<\varepsilon_{\nu}\,\, \text{ and
}\,\, \|\partial_sF_u(x,s,u)\|<\varepsilon_{\nu}.\end{equation}

\noindent We then set
\begin{equation}\label{deltaa}\delta:=\min\left(1,\widetilde{\delta},\eta,\frac{1-\overline{\lambda}}{3C_2(2\nu+1)^2}\right).\end{equation}

\begin{prop}\label{delta}
Let $\delta$ and $\nu$ be as above. Then, for all $n$\, $\in$ $\N$,
there exists a $C^1$-map $\xi_n:B_{\delta}\rightarrow N\times B_s^p$
such that $\widehat{\Delta}^n:=\widetilde{\d}^n\cap D_{\delta}$ is
the graph of $\xi_n$. Moreover, if for all $n$ $\in$ $\N$,
$\xi_n=(X_n,S_n)$, then $\|\xi'_n\|:=\max(\|X'_n\|,\|S'_n\|)=\sup_{u
\in B_{\delta}}(\|\xi'_n(u)\|)$ satisfies $\|\xi'_n\|\leq\nu$.
\end{prop}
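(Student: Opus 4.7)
I would prove Proposition~\ref{delta} by induction on $n$, producing the graph property and the derivative bound simultaneously. The base case $n=0$ is immediate from Section~\ref{step2}: since $\delta\leq\widetilde{\delta}$, set $\xi_0:=\xi|_{B_\delta}$, and the bound $\|\xi_0'\|\leq\widetilde{\nu}\leq\nu$ is automatic from the definition $\nu=\max(1,\widetilde{\nu})$. For the inductive step, assume $\widehat{\d}^n=\graph\xi_n$ with $\xi_n=(X_n,S_n)$ and $\|\xi_n'\|\leq\nu$. The key auxiliary map is $\Phi_n(u):=F_u(\xi_n(u),u)$, defined on $B_\delta$; the strategy is to show that a suitable restriction of $\Phi_n$ is a $C^1$-diffeomorphism onto $B_\delta$, so that $\xi_{n+1}(v):=(F_x,F_s)(\xi_n(\Phi_n^{-1}(v)),\Phi_n^{-1}(v))$ is well-defined, $C^1$, and has $\widehat{\d}^{n+1}$ as its graph.

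Inverting $\Phi_n$ is the first task. I would factor $\Phi_n'(u)=\partial_uF_u\cdot(I+R(u))$ with $R(u):=(\partial_uF_u)^{-1}[\partial_xF_u\cdot X_n'(u)+\partial_sF_u\cdot S_n'(u)]$. Since $\delta\leq\eta$, the bounds $\|\partial_xF_u\|,\|\partial_sF_u\|<\varepsilon_\nu$ hold, and combined with item~3 of Proposition~\ref{vois}, they yield $\|R(u)\|\leq 2\overline{\lambda}\nu\varepsilon_\nu=(1-\overline{\lambda})/6$; this is precisely why $\varepsilon_\nu$ was chosen as in~(\ref{epsilunnu}). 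A Neumann series argument gives $\|(\Phi_n')^{-1}\|\leq 6\overline{\lambda}/(5+\overline{\lambda})<1$, so $\Phi_n$ is expanding. Since $P^n\in\widetilde{W}^s(N)$ and $\widetilde{W}^s(N)$ is $F$-invariant, (\ref{Fu}) forces $\Phi_n(0)=F_u(P^n)=0$; combined with expansion, this ensures $\Phi_n^{-1}$ exists on $B_\delta$ and takes values in $B_\delta$, producing the required $C^1$-graph description of $\widehat{\d}^{n+1}$.

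The genuine difficulty is the derivative bound $\|\xi_{n+1}'\|\leq\nu$, where the specific form of $\delta$ in~(\ref{deltaa}) must be used. The chain rule yields, with $u=\Phi_n^{-1}(v)$,
$$X_{n+1}'(v)=\bigl[\partial_xF_x\cdot X_n'(u)+\partial_sF_x\cdot S_n'(u)+\partial_uF_x\bigr](\Phi_n'(u))^{-1},$$
and analogously for $S_{n+1}'$. I would separate three kinds of contributions. The dominant term $\partial_xF_x\cdot X_n'\cdot(\Phi_n')^{-1}$ is controlled by item~4 of Proposition~\ref{vois} and is bounded by $\nu\overline{\lambda}\cdot 6/(5+\overline{\lambda})$, strictly less than $\nu$ with slack of order $1-\overline{\lambda}$. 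The off-diagonal terms $\partial_sF_x\cdot S_n'$ and (for $S_{n+1}'$) $\partial_xF_s\cdot X_n'$ are handled by item~5 of Proposition~\ref{vois}. The most delicate terms $\partial_uF_x$ and $\partial_uF_s$ do not shrink a priori, but they vanish identically on $N$ thanks to the block form~(\ref{DFdiag}); a Mean Value argument then gives
$$\|\partial_uF_x(\xi_n(u),u)\|\leq C_2\bigl(\|X_n(u)-P^n_0\|+\|S_n(u)\|+\|u\|\bigr)\leq C_2\bigl((2\nu+1)\delta+\|s_n\|\bigr),$$
using $X_n(0)=P^n_0$, $\|\xi_n'\|\leq\nu$ and $\|u\|\leq\delta$. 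The contribution $\|s_n\|\leq\overline{\lambda}^n\|s\|$ is bounded uniformly in $n$ thanks to item~1 of Proposition~\ref{vois}. The main obstacle—the one I would spend most care on—is the quantitative bookkeeping: the choice of $\delta$ in~(\ref{deltaa}), in particular the factor $(2\nu+1)^2$ in the denominator, is calibrated so that after multiplication by $\|(\Phi_n')^{-1}\|\leq 6\overline{\lambda}/(5+\overline{\lambda})$ the off-diagonal and $\partial_u$-contributions fit exactly into the slack $5\nu(1-\overline{\lambda})/(5+\overline{\lambda})$ left by the dominant term. Verifying these inequalities term by term closes the induction and completes the proof.
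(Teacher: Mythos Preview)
Your inductive scheme and the definition of $\Phi_n$ match the paper's, but two steps do not close as written.

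\textbf{Invertibility of $\Phi_n$.} The bound $\|(\Phi_n')^{-1}\|<1$ together with $\Phi_n(0)=0$ does \emph{not} by itself guarantee that $\Phi_n$ is a diffeomorphism from (a subset of) $B_\delta$ onto $B_\delta$: a pointwise lower bound on $\|\Phi_n'(u)v\|$ does not prevent folding. What is actually needed is a bound of the form $\|I-\chi'(u)\|<\kappa<1$ for the normalized map $\chi(u)=[\partial_uF_u(P^n)]^{-1}\Phi_n(u)$, so that a fixed-point argument applies. This is Lemma~\ref{contractante} and Proposition~\ref{prop1} in the paper, and it is precisely \emph{here} that the constant $(1-\overline{\lambda})/(3C_2(2\nu+1)^2)$ in the definition~(\ref{deltaa}) of $\delta$ is consumed. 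You have misattributed the role of this constant: it is not there for the derivative estimate.

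\textbf{The $\partial_uF_x,\partial_uF_s$ terms.} You invoke the block form~(\ref{DFdiag}) on $N$ and obtain $\|\partial_uF_x(\xi_n(u),u)\|\leq C_2\bigl((2\nu+1)\delta+\|s_n\|\bigr)$. The paper instead uses the block form~(\ref{DFU}) on the whole of $\widetilde{W}^u(N)=\{s=0\}$, where $\partial_uF_x$ and $\partial_uF_s$ vanish identically; Mean Value from $(X_n(u),S_n(u),u)$ to $(X_n(u),0,u)$ then gives the sharper bound $C_2\|S_n(u)\|$ with \emph{no} $\delta$-term. This matters: with the paper's constants the three contributions to $\|\xi_{n+1}'\|$ sum to
\[
\widetilde{\alpha}\,\nu+\tfrac{5-5\lambda}{2(11+\lambda)}\,\nu+\tfrac{5-5\lambda}{2(11+\lambda)}\leq \nu,
\]
with equality when $\nu=1$. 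Your extra term $C_2(2\nu+1)\delta\leq \tfrac{1-\overline{\lambda}}{3(2\nu+1)}>0$ therefore cannot be absorbed when $\nu=1$, and the induction does not close. The remedy is simply to use~(\ref{DFU}) in place of~(\ref{DFdiag}).
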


 The rest of this subsection is devoted to the proof of
Proposition \ref{delta}. We will proceed by induction. We first
prove these statements for the first iteration, by using
intermediate lemmas which will be very useful for the estimates
later on. All the computations will be independent of $n$, which will
easily yield the proof of the inductive step.



Note that when $n=0$, the statement follows from Section \ref{step2}
and the definition of $\nu$. Therefore, we have to prove that if for
$n$ $\in$ $\N$, $\widehat{\Delta}^n=\graph\xi_n=\{(X_n(u),S_n(u),u);
u \in B_{\delta}\}$ with $\|\xi'_n\|\leq\nu$, then
$F(\widehat{\Delta}^n)$ is also the graph of a map $\xi_{n+1}$ over an
open set in $\R_u^{p}$ strictly containing $B_{\delta}$. We then set
$$\widehat{\Delta}^{n+1}=F(\widehat{\Delta}^n)\cap
D_{\delta}=\widetilde{\d}^{n+1}\cap
D_{\delta}=\graph\xi_{n+1}=\{(X_{n+1}(u),S_{n+1}(u),u); u \in
B_{\delta}\}.$$ Note that we will keep the same notation for $\xi_{n+1}$ and its restriction to $B_{\delta}$. We also have to prove that $\|\xi'_{n+1}\|<\nu$.

To simplify this step and to keep our formulas legible, we will
actually prove that Proposition~\ref{delta} holds true when $n=1$.
Since all the computations will be independent of $n$, one can
easily see that the statements are valid for an arbitrary $n$.

By applying $F$ to $\widehat{\Delta}=\{(X(u),S(u),u); u \in
B_{\delta}\}$, one gets
$$F(\widehat{\Delta})=\big\{\big(F_x(X(u),S(u),u),F_s(X(u),S(u),u),F_u(X(u),S(u),u)\big);
u \in B_{\delta}\big\}.$$ Let $G(u):=F_u(X(u),S(u),u)=h$. We will prove
that $G$ is a homeomorphism onto its image $B'_{\delta}$ and that
the latter strictly contains $B_{\delta}$. Then, it is easy to see
that $F(\widehat{\Delta})$ restricted to $D'_{\delta}:=\{(x,s,u) \in
\mathcal{V}; u \in B'_{\delta}\}$ is the graph of $(X_1,S_1)$, where
$$X_1(h)=F_x\big(X(G^{-1}(h)), S(G^{-1}(h)),G^{-1}(h)\big), $$ and
$$S_1(h)=F_s\big(X(G^{-1}(h)), S(G^{-1}(h)),G^{-1}(h)\big), $$
for $h \in B'_{\delta}$. We will need the following lemmas.

\begin{lemme}\label{G'u} For all $u$ $\in$ $B_{\delta}$, $G'(u)$ is an
isomorphism on $\R_u^{p}$. Moreover,
$$[G'(u)]^{-1}=\Big(\sum_{m\geq0}\left (-H(u)\right )^m\Big).[\partial_uF_u(X(u),S(u),u)]^{-1},$$
where
$H(u):=[\partial_uF_u(X(u),S(u),u)]^{-1}.[\partial_xF_u(X(u),S(u),u).X'(u)
+
\partial_sF_u(X(u),S(u),u).S'(u)]$.
\end{lemme}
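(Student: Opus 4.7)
The plan is to prove invertibility of $G'(u)$ by a standard Neumann-series argument, after factoring out the dominant block $\partial_u F_u$. Everything in Section~\ref{step3} has been set up (choice of $\varepsilon_\nu$, of $\delta\le\eta$, of $\nu$) precisely so that the perturbative term is a genuine contraction; my job is just to assemble these ingredients.

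First I would differentiate $G(u)=F_u(X(u),S(u),u)$ by the chain rule. Writing $Z(u):=(X(u),S(u),u)$, this gives
$$G'(u)=\partial_xF_u(Z(u))\cdot X'(u)+\partial_sF_u(Z(u))\cdot S'(u)+\partial_uF_u(Z(u)).$$
By the shrinking of $\mathcal{V}$ performed before Proposition~\ref{vois}, $\partial_uF_u(Z(u))$ is invertible; I would then factor it out on the left and write $G'(u)=\partial_uF_u(Z(u))\cdot\bigl(I+H(u)\bigr)$ with $H(u)$ exactly as in the statement.

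The main (and essentially only) step is to verify $\|H(u)\|<1$, so that $I+H(u)$ is invertible via a Neumann series. I would bound
$$\|H(u)\|\le\|[\partial_uF_u(Z(u))]^{-1}\|\cdot\bigl(\|\partial_xF_u(Z(u))\|\,\|X'(u)\|+\|\partial_sF_u(Z(u))\|\,\|S'(u)\|\bigr).$$
The first factor is $<\overline{\lambda}$ by item~3 of Proposition~\ref{vois}. Since $u\in B_\delta$ and $\delta\le\eta$ by (\ref{deltaa}), the mixed partials $\partial_xF_u(Z(u))$ and $\partial_sF_u(Z(u))$ are both bounded by $\varepsilon_\nu$ thanks to (\ref{{epsilunnu'}}); and $\|X'(u)\|,\|S'(u)\|\le\nu$ by the inductive graph hypothesis for $\xi_n$ (or the definition of $\nu$ when $n=0$). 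Plugging in the value of $\varepsilon_\nu$ from (\ref{epsilunnu}), I would obtain
$$\|H(u)\|<\overline{\lambda}\cdot 2\nu\varepsilon_\nu=\overline{\lambda}\cdot 2\nu\cdot\frac{1-\overline{\lambda}}{12\nu\overline{\lambda}}=\frac{1-\overline{\lambda}}{6}<1.$$

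From this the conclusion is immediate: $I+H(u)$ is invertible with $(I+H(u))^{-1}=\sum_{m\ge0}(-H(u))^m$, so $G'(u)$ is an isomorphism of $\R_u^p$ and
$$[G'(u)]^{-1}=\bigl(I+H(u)\bigr)^{-1}\cdot[\partial_uF_u(Z(u))]^{-1}=\Big(\sum_{m\ge0}(-H(u))^m\Big)\cdot[\partial_uF_u(Z(u))]^{-1},$$
which is the claimed formula. There is no real obstacle here; the only thing to watch is that the estimate on $\|H(u)\|$ must be uniform in $u\in B_\delta$ and (later, for the induction) in $n$, which is automatic since every bound used depends only on $\overline{\lambda}$, $\nu$, $\varepsilon_\nu$, and the condition $\delta\le\eta$, none of which depend on the iterate.
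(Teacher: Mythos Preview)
Your proposal is correct and follows essentially the same route as the paper: differentiate $G$, factor $G'(u)=\partial_uF_u(Z(u))\cdot(I+H(u))$, bound $\|H(u)\|<2\overline{\lambda}\nu\varepsilon_\nu=\frac{1-\overline{\lambda}}{6}<1$ using Proposition~\ref{vois} and (\ref{{epsilunnu'}}), and invert via the Neumann series. Your remark on uniformity in $u$ and in the iterate $n$ is apt and anticipates the inductive use of the lemma.
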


\begin{proof}
$G(u)=F_u(X(u),S(u),u)$ gives by derivation
$$G'(u)=\partial_xF_u(X(u),S(u),u).X'(u)
+\partial_sF_u(X(u),S(u),u).S'(u) +
\partial_uF_u(X(u),S(u),u).$$
Recall that the linear map $\partial_uF_u(x,s,u)$ is invertible for
all $(x,s,u)$ $\in$ $\mathcal{V}$, and satisfies:
$$\|[\partial_uF_u(x,s,u)]^{-1}\|<\overline{\lambda}<1.$$ Then one
can write
$$G'(u)=[\partial_uF_u(X(u),S(u),u)].[I + H(u)],$$ where
$H(u):=[\partial_uF_u(X(u),S(u),u)]^{-1}.[\partial_xF_u(X(u),S(u),u).X'(u)
+
\partial_sF_u(X(u),S(u),u).S'(u)]$. Since
$\partial_uF_u(X(u),S(u),u)$ is invertible, it is enough to prove
that $I + H(u)$ is invertible too. It is the case if $\|H(u)\|<1$ because it
is an endomorphism of $\R_u^{p}$. So now we will prove that
$\|H(u)\|<1$. It is easy to see that, by definition of
$\varepsilon_{\nu}$ (equation~(\ref{epsilunnu})), for all $u$ $\in$
$B_{\delta}$, $$\|H(u)\| < 2\overline{\lambda} \nu
\varepsilon_{\nu}=\frac{1-\overline{\lambda}}6<1.$$

\noindent Therefore $I+H(u)$ is invertible on $\R_u^{p}$ and $[I +
H(u)]^{-1}=\sum_{m\geq0}(-H(u))^m$. This ends the proof of Lemma
\ref{G'u}.
\end{proof}

\begin{lemme}\label{A} For all $u$ $\in$ $B_{\delta}$,
$\|[G'(u)]^{-1}\|<1$.
\end{lemme}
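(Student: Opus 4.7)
The plan is to deduce Lemma \ref{A} directly from the explicit formula for $[G'(u)]^{-1}$ obtained in Lemma \ref{G'u}, by controlling each factor separately and multiplying the bounds.

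First, I would use the geometric series representation
$$[G'(u)]^{-1} = \Bigl(\sum_{m \geq 0} (-H(u))^m\Bigr) \cdot [\partial_u F_u(X(u),S(u),u)]^{-1}$$
from Lemma \ref{G'u}. The bound $\|H(u)\| < \tfrac{1-\overline{\lambda}}{6}$ was already established in the previous proof (this is the key input, obtained from $\|H(u)\| < 2\overline{\lambda}\nu\varepsilon_\nu$ together with the definition (\ref{epsilunnu}) of $\varepsilon_\nu$). The submultiplicativity of the operator norm then gives
$$\Bigl\|\sum_{m\geq 0}(-H(u))^m\Bigr\| \leq \frac{1}{1-\|H(u)\|} \leq \frac{1}{1-\frac{1-\overline{\lambda}}{6}} = \frac{6}{5+\overline{\lambda}}.$$

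Second, I would invoke item 3 of Proposition \ref{vois}, namely $\|[\partial_u F_u(Z)]^{-1}\| < \overline{\lambda}$ for all $Z \in \mathcal{V}$, applied at $Z = (X(u),S(u),u)$. Combining the two estimates yields
$$\|[G'(u)]^{-1}\| < \frac{6\overline{\lambda}}{5+\overline{\lambda}},$$
and the inequality $\tfrac{6\overline{\lambda}}{5+\overline{\lambda}} < 1$ is equivalent to $\overline{\lambda} < 1$, which holds by construction since $\overline{\lambda} = \tfrac{1+\lambda}{2} \in \,]\lambda;1[$.

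There is no real obstacle here: the whole point of the very specific value of $\varepsilon_\nu$ chosen in (\ref{epsilunnu}) was precisely to make the resulting numerical bound on $\|H(u)\|$ small enough that, after multiplication by $\overline{\lambda} < 1$, the norm of $[G'(u)]^{-1}$ stays strictly below $1$. The lemma is therefore essentially a bookkeeping step that records this uniform contraction estimate, which will later be used to prove that $G$ is a homeomorphism onto an image strictly containing $B_\delta$.
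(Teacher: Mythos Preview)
Your proof is correct and follows essentially the same approach as the paper's own proof: both use the formula for $[G'(u)]^{-1}$ from Lemma~\ref{G'u}, bound the Neumann series by $\frac{1}{1-\|H(u)\|}$, substitute the estimate $\|H(u)\|<\frac{1-\overline{\lambda}}{6}$, and multiply by $\|[\partial_uF_u]^{-1}\|<\overline{\lambda}$ to obtain $\frac{6\overline{\lambda}}{5+\overline{\lambda}}<1$. Your additional commentary on the role of the constant $\varepsilon_\nu$ is accurate and well placed.
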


\begin{proof} This easily follows from the previous lemma. In
fact,
$$\begin{array}{cll}
 \|[G'(u)]^{-1}\|&\leq&\|\sum\limits_{m\geq0}\left (-H(u)\right )^m\|.
 \|[\partial_uF_u(X(u),S(u),u)]^{-1}\|\\&&\\&\leq& \frac1{1-\|H(u)\|}.
 \|[\partial_uF_u(X(u),S(u),u)]^{-1}\|\\&&\\
&<& \frac{1}{1-2\overline{\lambda}\nu\varepsilon_{\nu}}.
  \|[\partial_uF_u(X(u),S(u),u)]^{-1}\|\\&&\\

 &<& \frac6{5+\overline{\lambda}}.
  \|[\partial_uF_u(X(u),S(u),u)]^{-1}\|<\frac{6\overline{\lambda}}{5+\overline{\lambda}}<1.
\end{array}$$
\end{proof}

We will now prove that $G$ is invertible.
\begin{prop}\label{prop1} There exists an open set $B'_{\delta}$ in
$\R_u^p$ strictly containing $B_{\delta}$, such that $G$ is a
homeomorphism from $B_{\delta}$ onto $B'_{\delta}$.

\end{prop}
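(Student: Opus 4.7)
My plan is to establish a single quantitative estimate—that $G$ is $c$-expanding on $B_\delta$ for some $c>1$—from which injectivity of $G$, openness of $G(B_\delta)$, and the strict containment $G(B_\delta)\supsetneq B_\delta$ all follow. Openness of $B'_\delta:=G(B_\delta)$ will in fact already follow from Lemma~\ref{G'u} together with the inverse function theorem, since $G'(u)$ is a linear isomorphism at every $u\in B_\delta$, so $G$ is a local $C^1$-diffeomorphism there.

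For the expansion estimate, I would write, for $u_1,u_2\in B_\delta$,
\[
G(u_1)-G(u_2) \;=\; M(u_1,u_2)\cdot(u_1-u_2),\qquad M(u_1,u_2):=\int_0^1 G'\bigl((1-t)u_2+tu_1\bigr)\,dt,
\]
and show $\|M(u_1,u_2)\,v\|\geq c\|v\|$ with $c>1$ uniformly in $u_1,u_2\in B_\delta$ and $v\in\R_u^p$. I would split $G'(v)=\partial_uF_u(X(v),S(v),v)+R(v)$, where by the definition of $\varepsilon_\nu$ in~(\ref{epsilunnu}) together with the bound $\|\xi'\|\leq\nu$ one has $\|R(v)\|\leq 2\nu\varepsilon_\nu$. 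The principal term $\partial_uF_u(X(v),S(v),v)$ is close to $\partial_uF_u(0,0,0)$: by the Mean Value Theorem applied to the $C^2$ map $F$, the difference is bounded by $C_2(2\nu+1)\delta$, which the third clause of~(\ref{deltaa}) forces to be a small fraction of $1-\overline\lambda$. Since $\partial_uF_u(0,0,0)$ has inverse of norm $<\overline\lambda$, it is expanding with rate $>1/\overline\lambda>1$; integrating over $t$ and absorbing $R$ then leaves a uniform expansion rate $c>1$ for $M(u_1,u_2)$. Injectivity of $G$ on $B_\delta$ is immediate from $\|G(u_1)-G(u_2)\|\geq c\|u_1-u_2\|$.

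For the strict containment, equation~(\ref{Fu}) gives $G(0)=F_u(X(0),S(0),0)=0$, so the estimate with $u_2=0$ yields $\|G(u)\|\geq c\|u\|$ for all $u\in B_\delta$. In particular $G(\partial B_\delta)$ sits at distance $\geq c\delta>\delta$ from the origin, so $G(\partial B_\delta)\cap\overline{B_\delta}=\emptyset$. Extending the expansion by continuity to $\overline{B_\delta}$, $G$ is injective on the whole closed ball, hence $\partial G(B_\delta)\subset G(\partial B_\delta)$; combined with openness of $G(B_\delta)$, a standard clopen argument inside the connected set $B_\delta$ gives $B_\delta\subset G(B_\delta)$, and the boundary bound rules out equality. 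Thus $G$ is a homeomorphism of $B_\delta$ onto the open set $B'_\delta$ with $B'_\delta\supsetneq B_\delta$.

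I expect the main difficulty to be the quantitative expansion bound: the constants $\overline\lambda$, $\varepsilon_\nu$ and $\delta$ have been tuned precisely so that the two perturbative contributions to $M(u_1,u_2)$—the one from varying the base point of $\partial_uF_u$ and the one coming from $R$—do not together consume the gap $1/\overline\lambda-1$. Keeping careful track of the constants so that one obtains a genuine $c>1$ (rather than just $c\geq 1$) is the delicate arithmetic of this step.
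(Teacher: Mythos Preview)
Your argument is correct, but the organisation differs from the paper's. The paper introduces the auxiliary map $\chi(u)=[\partial_uF_u(P)]^{-1}G(u)$ and proves (its Lemma~\ref{contractante}) that $\|I-\chi'(u)\|<\frac{1-\overline\lambda}{2}=:\kappa$ on $\overline{B_\delta}$; this makes $x\mapsto x-\chi(x)+y$ a contraction of $\overline{B_\delta}$ whenever $\|y\|\le\delta(1-\kappa)$, so the Banach fixed-point theorem yields bijectivity of $\chi$ onto a set containing $\overline{B_{\delta(1-\kappa)}}$, and hence $\Im G\supset B_{\delta(1-\kappa)/\overline\lambda}\supsetneq B_\delta$ since $1-\kappa>\overline\lambda$. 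Your route replaces the fixed-point argument by a direct expansion estimate on $G$ together with a connectedness argument for the strict containment. The quantitative core is the same---both reduce to bounding the deviation of $G'(u)$ from a single expanding reference operator, and the constants in (\ref{epsilunnu}) and (\ref{deltaa}) are precisely what make either version go through---but the paper's packaging hands you an explicit ball inside $\Im G$, whereas yours gets the inclusion $B_\delta\subset G(B_\delta)$ via a clopen argument. One small correction: your reference operator should be $\partial_uF_u(P)=\partial_uF_u(X(0),S(0),0)$ rather than ``$\partial_uF_u(0,0,0)$'', since the first coordinate ranges over the manifold $N$ and has no canonical origin; the bound $\|[\partial_uF_u(P)]^{-1}\|<\overline\lambda$ from Proposition~\ref{vois} is exactly what you need, and the rest of your estimate is unaffected.
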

\begin{proof}
Without loss of generality, we can assume that $\xi$ is defined on
$\overline{B}_{\delta}$. We introduce an auxiliary map defined on
$\overline{B}_{\delta}$,
$$\chi(u)=[\partial_uF_u(X(0),S(0),0)]^{-1}. G(u)=[\partial_uF_u(P)]^{-1}. G(u).$$

\noindent We will first study the invertibility of $\chi$, from
which that of $G$ easily follows. Let $y$ be in a subset of $\R_u^p$ to
be specified later on. We are looking for the conditions under which
there exists a unique $x$ $\in$ $B_{\delta}$, such that $y=\chi(x)$.
We let $\psi(x):=x-\chi(x)+y$, so that the point $y$ has a unique preimage
under $\chi$ if and only if $\psi$ has a unique fixed point. To
prove this last property, we will need the next lemma.

\begin{lemme}\label{contractante} For all $u$ $\in$ $\overline{B}_{\delta}$, $\|I-\chi'(u)\|<\frac{1-\overline{\lambda}}2$.

\end{lemme}

\begin{proof} By derivating $\chi(u)=[\partial_uF_u(P)]^{-1}.
G(u)$, one gets
$$\chi'(u)=[\partial_uF_u(P)]^{-1}.[\partial_uF_u(X(u),S(u),u)].[I +
H(u)].$$ We set
$\mathcal{W}:=[\partial_uF_u(P)]^{-1}.[\partial_uF_u(X(u),S(u),u)]$
and $\mathcal{T}:=\mathcal{W}-I$ so that $\mathcal{W}=\mathcal{T}+I$
and $\chi'(u)-I=\mathcal{W}.(I+ H(u))-I$. Recall that
$\|H(u)\|<2\overline{\lambda} \nu
\varepsilon_{\nu}=\frac{1-\overline{\lambda}}6$ (see the proof of
Lemma~\ref{G'u}). Therefore,
$$\begin{array}{ll}
\|\mathcal{T}\|&=\|[\partial_uF_u(P)]^{-1}.[\partial_uF_u(X(u),S(u),u)]-I\|\\
&=\|[\partial_uF_u(P)]^{-1}.[\partial_uF_u(X(u),S(u),u)-\partial_uF_u(P)]\|\\
&\leq\|[\partial_uF_u(P)]^{-1}\|.\|[\partial_uF_u(X(u),S(u),u)-\partial_uF_u(X(0),S(0),0)\|\\
&\leq\overline{\lambda}(2C_2\nu+C_2)\|u\|,\\
\end{array}$$
by the Mean Value Theorem. Writing $\chi'(u)-I=(\mathcal{T}+I).(I+
H(u))-I=H(u) + \mathcal{T}.(I+H(u))$  gives
$$\begin{array}{ll}
\|\chi'(u)-I\|&\leq\|H(u)\|+\|\mathcal{T}\|.(1+\|H(u)\|)\\
&<  2\overline{\lambda} \nu
\varepsilon_{\nu}+ \overline{\lambda}(2C_2\nu+C_2)\|u\| (1+2 \varepsilon_{\nu} \overline{\lambda}\nu) \\
&< \frac{1-\overline{\lambda}}6 + C_2(2\nu+1)^2\|u\|,
\end{array}$$because $\overline{\lambda}<1$ and $\overline{\lambda}
\varepsilon_{\nu}<1$ using equation (\ref{epsilunnu}).

Recall that $\|u\|< \frac{1-\overline{\lambda}}{3C_2(2\nu+1)^2}$ by
equation (\ref{deltaa}), which yields
$$\begin{array}{ll}
\|\chi'(u)-I\|& <\frac{1-\overline{\lambda}}6 +
\frac{1-\overline{\lambda}}3\\
&<\frac{1-\overline{\lambda}}2.\\
\end{array}$$

\noindent This ends
the proof of Lemma \ref{contractante}.
\end{proof}

\noindent $\bullet$ We now go back to proving the invertibility of
$\chi$. Let $\kappa:=\frac{1-\overline{\lambda}}2$. Clearly $\kappa<1$. The last lemma
shows that $\psi=I_{B_{\delta}}-\chi+y$ is a contracting map. In
order for it to have a unique fixed point, one needs to have
$\psi(\overline{B}_{\delta})\subset\overline{B}_{\delta}$. And this
condition is satisfied if $\|y\|\leq\delta(1-\kappa)$. Therefore
$\chi:\overline{B}_{\delta} \longrightarrow \Im\chi$ is bijective
and satisfies $\overline{B}_{\delta(1-\kappa)}\subset\Im\chi$.\\


\noindent $\bullet$ The invertibility of $G$ easily follows from
that of $\chi$. Recall that $\chi=[\partial_uF_u(P)]^{-1}.G$.
Therefore, $G:\overline{B}_{\delta} \longrightarrow \Im G$ is an
homeomorphism and satisfies $B'_{\delta}:= \Im
G\supset\overline{B}_{\frac{\delta(1-\kappa)}{\overline{\lambda}}}$.

\noindent Recall that $\kappa=\frac{1-\overline{\lambda}}2$ which
gives $(1-\kappa)>\overline{\lambda}$ and thus $B'_{\delta}$, which
contains $B_{\frac{\delta(1-\kappa)}{\overline{\lambda}}}$, strictly
contains $B_{\delta}$. This ends the proof of the proposition.
\end{proof}

\bigskip

Therefore, the proof of the graph property in Proposition
\ref{delta} for the case $n=1$ is complete. Let
$\widehat{\Delta}^{1}=F(\widehat{\Delta})\cap
D_{\delta}=\graph\xi_{1}=\{(X_{1}(u),S_{1}(u),u); u \in
B_{\delta}\}$. The next proposition will not only end the proof of
the case $n=1$, but will also be a preliminary step to estimating
$\lim\limits_{n\rightarrow \infty} \|\xi'_n\|$ in Section
\ref{step4}.


\begin{prop}\label{xi'} If we set $\|\xi'_1\|:=\sup_{u \in B_{\delta}}(\|\xi_1'(u)\|)
=\max(\|X'_1\|,\|S'_1\|)$, then $\|\xi'_1\| <\nu$.
\end{prop}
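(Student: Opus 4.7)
The plan is to compute $X_1'$ and $S_1'$ directly via the chain rule, then bound each of the three resulting summands by pairing partial derivative estimates from Proposition~\ref{vois} with the factorization of $[G'(u)]^{-1}$ from Lemma~\ref{G'u}. Writing $u=G^{-1}(h)$ and differentiating $X_1(h) = F_x(X(u),S(u),u)$ gives
\begin{equation*}
X_1'(h) = \bigl[\partial_x F_x \cdot X'(u) + \partial_s F_x \cdot S'(u) + \partial_u F_x\bigr]\cdot [G'(u)]^{-1},
\end{equation*}
with all partials evaluated at $(X(u),S(u),u)$, and an analogous formula for $S_1'(h)$ obtained by replacing the leading $F_x$ by $F_s$.

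To exploit the hyperbolicity, I would rewrite $[G'(u)]^{-1} = [I+H(u)]^{-1}[\partial_u F_u(X(u),S(u),u)]^{-1}$ and regroup $\partial_x F_x$ with $[\partial_u F_u]^{-1}$: item~4 of Proposition~\ref{vois} bounds this combined factor by $\overline{\lambda}$, while $\|[I+H(u)]^{-1}\| < \frac{6}{5+\overline{\lambda}}$ from the proof of Lemma~\ref{A} controls the rest, absorbing the $\|X'\|\le \nu$ factor. The mixed term $\partial_s F_x\cdot S'$ is handled by pairing item~5 of Proposition~\ref{vois} (which bounds $\|\partial_s F_x\|$) with $\|[G'(u)]^{-1}\|<1$ from Lemma~\ref{A}. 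For $S_1'$ the analogous computation uses items~3 and~5 in place of items~4 and~5.

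The delicate summand is the pure $\partial_u F_x$ term (and its analogue $\partial_u F_s$ for $S_1'$), which carries no factor of $\nu$. The key observation is that equations~(\ref{Fx}) and~(\ref{Fs}) give $\partial_u F_x(x,0,u)=0$ and $\partial_u F_s(x,0,u)=0$ for all $(x,u)$, so both partials vanish identically on $\widetilde{W}^u(N)=\{s=0\}$. Applying the Mean Value Theorem along the segment from $(X(u),0,u)$ to $(X(u),S(u),u)$, with the $C^2$-bound $\|D^2F\|\le C_2$ from item~2, yields $\|\partial_u F_x(X(u),S(u),u)\|\le C_2\|S(u)\|$, and item~1 of Proposition~\ref{vois} is calibrated so that this is at most the same constant $\frac{5(1-\lambda)}{2(11+\lambda)}$ that bounds $\|\partial_s F_x\|$ in item~5. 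Assembling, $\|X_1'\|$ is strictly less than
\begin{equation*}
\frac{6(1+\lambda)}{11+\lambda}\,\nu \;+\; \frac{5(1-\lambda)}{2(11+\lambda)}\,\nu \;+\; \frac{5(1-\lambda)}{2(11+\lambda)},
\end{equation*}
and since $\nu\ge 1$, the last constant term is bounded by $\frac{5(1-\lambda)}{2(11+\lambda)}\nu$, so the three contributions telescope to exactly $\nu$ in the limiting arithmetic, and the strict inequalities in Proposition~\ref{vois} make the final bound strict: $\|X_1'\|<\nu$. The identical argument gives $\|S_1'\|<\nu$.

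The main obstacle is purely combinatorial bookkeeping: identifying the correct regrouping of the partial derivatives so that each non-contracting factor is paired with a contracting one via items~3 and~4, and verifying that the constants of items~1 and~5 in Proposition~\ref{vois} are calibrated precisely to make the sum fit under $\nu$. The choice $\nu=\max(1,\widetilde{\nu})$ in Section~\ref{step3} is designed for exactly this reason: the $\partial_u F$ contributions have no $\nu$-factor and can only be absorbed provided $\nu\ge 1$. Since none of the estimates depend on $n$, the same computation will yield the inductive step and thereby complete the proof of Proposition~\ref{delta}.
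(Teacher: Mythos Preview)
Your proposal is correct and follows essentially the same approach as the paper: both differentiate $X_1\circ G$ and $S_1\circ G$, factor $[G'(u)]^{-1}=[I+H(u)]^{-1}[\partial_uF_u]^{-1}$, pair $\partial_xF_x$ with $[\partial_uF_u]^{-1}$ via item~4 to obtain the bound $\widetilde\alpha=\frac{6\overline\lambda}{5+\overline\lambda}=\frac{6(1+\lambda)}{11+\lambda}$, use item~5 for the mixed partials, and use the vanishing of $\partial_uF_x$, $\partial_uF_s$ on $\{s=0\}$ together with the Mean Value Theorem and item~1 for the remaining terms. The arithmetic and the role of $\nu\ge 1$ are identical; the paper simply packages the first two contributions as $\beta\|\xi'\|$ with $\beta=\frac{1+\widetilde\alpha}{2}$ before adding the third.
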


\begin{proof} We recall that for $h$ $\in$ $B'_\delta$,
$$X_1(h)=F_x\left(X(G^{-1}(h)), S(G^{-1}(h)),G^{-1}(h)\right), $$ and
$$S_1(h)=F_s\left(X(G^{-1}(h)), S(G^{-1}(h)),G^{-1}(h)\right). $$

\noindent Since we are only interested in uniform norms over
$B_\delta$, we consider $h$ to belong to $B_\delta$ from now on.
We let $u:=G^{-1}(h)$. Then $u$ $\in$ $G^{-1}(B_\delta) \varsubsetneq B_\delta$. We write
$$X_1(G(u))=F_x(X(u), S(u),u), $$ and
$$S_1(G(u))=F_s(X(u), S(u),u). $$
By derivating the two sides with respect to $u$ and inverting
$G'(u)$, one gets for all $u$ $\in$ $B_\delta$
$$\begin{array}{ll}
X'_1(G(u))&=\partial_xF_x(X(u),S(u),u).X'(u).[G'(u)]^{-1}+\partial_sF_x(X(u),S(u),u).S'(u).[G'(u)]^{-1}\\&
+\,\,\partial_uF_x(X(u),S(u),u).[G'(u)]^{-1} ,\end{array}$$ and
$$\begin{array}{ll}
S'_1(G(u))&=\partial_xF_s(X(u),S(u),u).X'(u).[G'(u)]^{-1}+\partial_sF_s(X(u),S(u),u).S'(u).[G'(u)]^{-1}
\\&+\,\,\partial_uF_s(X(u),S(u),u).[G'(u)]^{-1}.\end{array}$$

\noindent Let us begin by studying
$T:=\|\partial_xF_x(X(u),S(u),u)\|.\|[G'(u)]^{-1}\|$. Using the
estimates in Lemma~\ref{A}, one gets
$$\begin{array}{ll}T&\leq
\|\partial_xF_x(X(u),S(u),u)\|.\|[I+H(u)]^{-1}\|.\|[\partial_uF_u(X(u),S(u),u)]^{-1}\|\\&\\&<
\overline{\lambda}.\|[I+H(u)]^{-1}\|<\frac{6\overline{\lambda}}{5+\overline{\lambda}}:=\widetilde{\alpha},\end{array}$$
where we can easily see that
$0<\overline{\lambda}<\widetilde{\alpha}<1$. Recall that by
Proposition \ref{vois}, for all $u$ $\in$ $B_\delta$,
$$\max\big(\|\partial_sF_x(X(u),S(u),u)\|,\|\partial_xF_s(X(u),S(u),u)\|\big)<\frac{5-5\lambda}{2(11+\lambda)},$$which
yields
$$\|\xi'_{1}\|<\big(\widetilde{\alpha}+\frac{5-5\lambda}{2(11+\lambda)}\big)\|\xi'\|+\sup_{u \in B_{\delta}}\max\big(\|\partial_uF_x(X(u),S(u),u)\|,
\|\partial_uF_s(X(u),S(u),u)\|\big).$$

\noindent On the one hand,
$\widetilde{\alpha}=\frac{6\overline{\lambda}}{5+\overline{\lambda}}=\frac{6+6\lambda}{11+\lambda}$,
and thus
$\widetilde{\alpha}+\frac{5-5\lambda}{2(11+\lambda)}=\frac{1+\widetilde{\alpha}}2:=\beta$,
with $0<\widetilde{\alpha}<\beta<1$. On the other hand, using the
particular form of $F$ on the unstable manifold
(Equations (\ref{Fs}) and (\ref{Fx})), for $X$ $\in$ $\widetilde{W}^u(N)$, the
derivative $DF(X)$ at $X$ has the following form
\begin{equation}\label{DFU}DF(X)=\left(\begin{array}{ccc}
\partial_xF_x(X)& \partial_sF_x(X) &  0\\
0 & \partial_sF_s(X) & 0\\
\partial_xF_u(X)& \partial_sF_u(X) & \partial_uF_u(X)
\end{array}\right ).\end{equation} Therefore, using this particular form and the Mean Value Theorem,
one gets

$$\sup_{u \in B_{\delta}}\max\big(\|\partial_uF_x(X(u),S(u),u)\|,\|\partial_uF_s(X(u),S(u),u)\|\big)\leq
C_2 \sup_{u \in B_{\delta}}\|S(u)\|,$$and so \begin{equation}\label{xi'_1}\|\xi'_{1}\|<\beta
\|\xi'\|+C_2 \sup_{u \in B_{\delta}}\|S(u)\|.\end{equation}

\noindent Using item 1 of Proposition \ref{vois}, and the fact that
$\nu\geq1$, one gets

$$\|\xi'_{1}\|<\left(\beta +\frac{5-5\lambda}{2(11+\lambda)}\right)\nu=\nu.$$

\noindent This ends the proof of the lemma.
\end{proof}


Observe that the fact that $\nu$ is larger than $1$ is crucial
to show that $\|\xi'_{1}\|<\nu$ which explains our initial choice in the beginning of Section~\ref{step3}.  Since all the computations in the previous lemmas are
independent of $n$, the proof of the inductive step easily follows.

We then set $\widehat{\Delta}^{n}=\widetilde{\d}^{n}\cap
D_{\delta}=\graph\xi_{n}=\{(X_{n}(u),S_{n}(u),u); u \in
B_{\delta}\}$  for all $n$ $\in$ $\N$. This ends the proof of
Proposition \ref{delta}.




\subsection{Linear straightening along the graphs}\label{step4}

We will now see how tangent vectors along the graphs are
straightened. We will use the estimates of the previous section
to prove the following proposition.

Recall that
$\widehat{\Delta}^{n}=\graph\xi_{n}=\graph(X_{n},S_n)=\{(X_{n}(u),S_{n}(u),u);
u \in B_{\delta}\}$  for all $n$ $\in$ $\N$.

\begin{prop}
For all $\varepsilon>0$, there exists $n_0$ $\in$ $\N$, such that
for all $n\geq n_0$, $\|\xi'_n\|<\varepsilon$.
\end{prop}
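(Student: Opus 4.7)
The plan is to extend the recursive inequality derived in Proposition \ref{xi'} to every iteration, and then combine it with an exponential bound on $\sup_u \|S_n(u)\|$. Since the proof of Proposition \ref{xi'} relied only on the uniform bounds of Proposition \ref{vois}, which hold everywhere on $\mathcal V$, exactly the same calculation applied to the step $\widehat{\Delta}^n \to \widehat{\Delta}^{n+1}$ yields
\begin{equation}\label{recxi}
\|\xi'_{n+1}\| < \beta \|\xi'_n\| + C_2 \sup_{u \in B_\delta} \|S_n(u)\|,
\end{equation}
where $\beta := \tfrac{1}{2}(1 + \widetilde\alpha) \in (\overline\lambda, 1)$ is the constant that already appeared in the proof of Proposition \ref{xi'}.

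To control the $s$-component term in \eqref{recxi}, I would use the invariance of $\widetilde{W}^u(N) = \{s=0\}$ together with the contraction in the $s$-direction. Any point of $\widehat{\Delta}^n$ is of the form $F^n(q)$ for some $q \in \widetilde{\d}$ whose first $n$ iterates all remain in $\mathcal V$. Writing such an orbit as $(x_j,s_j,u_j)$, Equation~(\ref{Fs}) gives $F_s(x_j,0,u_j)=0$, so by the Mean Value Theorem and item~3 of Proposition~\ref{vois},
$$\|s_{j+1}\| = \|F_s(x_j,s_j,u_j) - F_s(x_j,0,u_j)\| \leq \overline\lambda \|s_j\|.$$
Since $\|s_0\| < \varsigma$, iterating produces the uniform estimate
$$\sup_{u\in B_\delta}\|S_n(u)\| \leq \overline\lambda^n \varsigma.$$

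Plugging this into \eqref{recxi} and iterating gives
$$\|\xi'_n\| < \beta^n \|\xi'_0\| + C_2 \varsigma \sum_{k=0}^{n-1} \beta^{n-1-k} \overline\lambda^k.$$
The first term vanishes as $n \to \infty$ because $\beta < 1$. The second term is, up to the constant $C_2\varsigma$, the general term of the Cauchy product of the absolutely convergent geometric series $\sum \beta^i$ and $\sum \overline\lambda^i$, hence tends to $0$ — this is precisely the argument that concluded the treatment of the sequence $(c_i)$ at the end of Proposition~\ref{etape1}. Combining the two bounds and taking $n$ large enough produces $\|\xi'_n\|<\varepsilon$, as desired. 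The only step that is not a quotation of an earlier computation is the exponential decay of $\|S_n\|$; this is the mild obstacle, but it reduces to the standard contraction of $F$ in the stable direction on $\mathcal V$.
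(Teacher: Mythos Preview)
Your argument is correct and follows essentially the same route as the paper: the paper also generalizes Inequality~(\ref{xi'_1}) to obtain~\eqref{recxi}, proves the exponential bound $\sup_{u}\|S_n(u)\|\leq\overline\lambda^{\,n}\sup_u\|S(u)\|$ by exactly the one-step contraction you use (each point of $\widehat\Delta^n$ has a preimage in $\widehat\Delta^{n-1}$, then apply the Mean Value Theorem with $F_s(x,0,u)=0$), and then concludes by the same summation argument as for $(b_n)$ in Section~\ref{step1}. The only cosmetic differences are that the paper bounds $\|s_0\|$ by $\sup_u\|S(u)\|$ rather than $\varsigma$ and invokes the $(b_n)$ computation rather than the Cauchy-product one; both conclusions are equivalent since $\overline\lambda<\beta<1$.
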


\begin{proof}  Generalizing to all the iterates Inequality~(\ref{xi'_1}), since the estimates are uniform with
respect to the order of the iteration, one gets
\begin{equation}\label{xi'_n+1}\|\xi'_{n+1}\|<\beta\|\xi'_{n}\|+C_2\sup_{u \in B_{\delta}}\|S_n(u)\|.\end{equation}By the Mean Value Theorem, one can prove by induction that $\sup\limits_{u \in
B_{\delta}}\|S_n(u)\|\leq \overline{\lambda}^n\sup\limits_{u \in
B_{\delta}}\|S(u)\|$. More precisely, for all $u$ $\in$ $B_{\delta}$
and for all $n$ $\in$ $\N^*$, there exists $Z=(Z_1,Z_2,Z_3)$ $\in$
$\widehat{\d}^{n-1}$ such that
$S_n(u)=F_s(Z)=F_s(Z)-F_s(Z_1,0,Z_3)$. Therefore $\|S_n(u)\|\leq
\overline{\lambda}\|Z_2\|\leq \overline{\lambda}\sup_{u \in
B_{\delta}}\|S_{n-1}(u)\|$, since
$\widehat{\d}^{n-1}=\graph(X_{n-1},S_{n-1})$, which proves our claim. Since
$C_2\sup\limits_{u \in B_{\delta}}\|S(u)\|<1$, then by Inequality~(\ref{xi'_n+1}),

$$\|\xi'_{n+1}\|<\beta\|\xi'_{n}\|+\overline{\lambda}^n.$$



\noindent The proof of the convergence follows the same lines as
that of $(b_n)$ in Section \ref{step1}, since $\beta<1$.
\end{proof}

\subsection{Nonlinear straightening and proof of Theorem 1}\label{C--0}

We can now end the proof of Theorem 1 by a simple application of the
Mean Value Theorem. We get for $n\geq n_0$,
$$\begin{array}{ll}\sup\limits_{u \in B_{\delta}}d\big(\xi_n(u),\left(P_0^n,0\right)\big)&\leq\sup\limits_{u \in
B_{\delta}}d\big(\xi_n(u),\xi_n(0)\big)+d\big(\xi_n(0),(P_0^n,0)\big)\\
&<\varepsilon+\|S_n(0)\|\\
&<\varepsilon+\overline{\lambda}^n\|S(0)\|,\end{array}$$ where we
have used that $\|u\|<1$. The convergence easily follows. This
completes the proof of Theorem 1.

\section{Proof of Theorem 2}\label{inK}

\begin{figure}
\centering
\def\JPicScale{0.8}
\input{proofcompactK.pst}
\caption{In $f^m(\overline{U})$}
\end{figure}
We will now prove Theorem 2 which will be a consequence of Theorem~1. Let $\varphi$ be the diffeomorphism
given by Proposition B and $\mathcal{U}$ be as in Section \ref{not}.
Let $\delta$ be given by Theorem 1. We set
$\overline{U}:=\varphi^{-1}(D_{\delta})$. Let $m$ $\in$ $\N$ be
fixed, then
$\psi_{(m,\overline{U})}=f^m\circ{\varphi^{-1}}_{|_{D_{\delta}}}$.
Let $(\widehat{\d}^n)$ be as in Theorem 1. For all $k\geq m$, let
$\overline{\d}^k:=\psi_{(m,\overline{U})}(\widehat{\d}^{k-m})$.
The $(m,\overline{U})$-graph property of $\overline{\d}^k$ is an
immediate consequence of Theorem~1. As for the convergence, the
$C^0$ part of the convergence is obvious by uniform continuity of
$\psi_{(m,\overline{U})}$. It is now enough to prove the
convergence of the second term of the
$(C^1,m,\overline{U})$-distance. There exist two positive real
numbers $\overline{C}$ and $\overline{\overline{C}}$, such that for
all $u$ $\in$ $B_{\delta}$, for all $v_1$ $\in$ $B_{\R_u^{p}}$, for
all $n$ $\in$ $\N$, if we set
$T:=\|{D\psi_{(m,\overline{U})}}(\xi_n(u),u).({\xi_n}'(u).v_1,v_1)-{D\psi_{(m,\overline{U})}}(X_n(0),0,u).(0,v_1)\|$,
then
$$\begin{array}{rl}
T\,\leq&
\|{D\psi_{(m,\overline{U})}}(\xi_n(u),u).({\xi_n}'(u).v_1,v_1)-{D\psi_{(m,\overline{U})}}(\xi_n(u),u).(0,v_1)\|
\\&+
\|{D\psi_{(m,\overline{U})}}(\xi_n(u),u).(0,v_1)-{D\psi_{(m,\overline{U})}}(X_n(0),0,u).(0,v_1)\|\\
\leq& \|{D\psi_{(m,\overline{U})}}(\xi_n(u),u)\|.\|{\xi_n}'(u)\|+
\|{D\psi_{(m,\overline{U})}}(\xi_n(u),u)-{D\psi_{(m,\overline{U})}}(X_n(0),0,u)\|\\
 \leq&
\overline{C}.\|{\xi_n}'(u)\|+\overline{\overline{C}}
d\left((\xi_n(u),u),(X_n(0),0,u)\right)
\end{array}$$by the Mean Value Theorem. The convergence follows from
Theorem 1. By setting $n:=k-m$, the proof of Theorem 2 is now
complete.



\section{Application to diffusion}\label{app1}
We will now use the $\lambda$-lemma to prove a diffusion
result. We will prove the existence of a shadowing orbit for a
finite family of invariant dynamically minimal sets contained in a normally hyperbolic manifold and having successive
heteroclinic connections. We will see that the existence of Arnold's
diffusion orbit easily follows from this application. 

\subsection{Shadowing
orbits for a finite family of invariant minimal sets} In this
section, we prove a corollary of the $\lambda$-lemma that
gives the existence of a shadowing orbit for a transition chain. 
 Let
$f$, $M$ and $N$ be as in Section~\ref{jfk}. If $A$ is an invariant \emph{dynamically
minimal} set contained in $N$, that is, a set in which the orbit of each point is dense, we set $$W^u(A):=\bigcup\limits_{a\in
A}W^{uu}(a).$$

\begin{definition}\em{\textbf{[Transition chain].}}\label{transitionchain} Let $n \in \N$, $(n>1)$. Let
$(A_k)_{1\leq k\leq n}$ be a finite family of invariant dynamically
minimal sets contained in $N$. We say that $(A_k)$ is a transition
chain if, for all $k=1,\ldots,n-1$, $W^u(A_k)\cap
W^s(A_{k+1})\neq\emptyset$.
\end{definition}

Note that we do not require any regularity for the sets. In the
Hamiltonian nearly integrable case, they can be general Aubry-Mather
sets for instance.\\

 We will only need the convergence in the $C^0$
topology stated in the $\lambda$-lemma to prove the
following result. In Figure \ref{heter}, we  illustrate the
assumptions of Corollary \ref{cor}, in the particular case $n_0=2$
and $p=1$. Of course, since the invariant manifolds are $3$-dimensional, this is only a rough representation of the situation.


\begin{cor}\label{cor}
Let $f$, $M$ and $N$ be as in Section~\ref{jfk}. Let $(A_k)_{1\leq k\leq n}$ be a
transition chain in $N$ such that, for all $k=1,\ldots,n-1$, there
exist $a_k$ $\in$ $A_{k}$, $b_{k+1}$ $\in$ $A_{k+1}$ and $c_k$ $\in$
$W^{uu}(a_{k})\cap W^{ss}(b_{k+1})$ such that $W^{uu}(a_{k})$ and
$W^s(N)$ transversely intersect at $c_k$. Then, for any $\varrho>0$, there exists an orbit $\Gamma$ such that, for all $k=1,\ldots,n$, $\Gamma$ intersects the $\varrho$-neighborhood of $A_k$.


\end{cor}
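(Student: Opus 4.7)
The plan is to apply the $\lambda$-lemma (Theorem~2) repeatedly along the heteroclinic chain, combined with the minimality of each $A_k$ and the continuity of the invariant laminations, in order to build a nested sequence of $C^1$-disks whose intersection provides the starting point of a shadowing orbit. Fix $\varrho>0$ and a relatively compact open set $K\subset M$ containing $N$, every heteroclinic point $c_k$, and a neighborhood of every $a_k$ in $W^{uu}(a_k)$, so that all forthcoming graph statements take place inside $K$.

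I would start with a small $p$-dimensional $C^1$-disk $\Delta_1 \subset W^{uu}(a_1)$ containing $c_1$, which by hypothesis meets $W^s(N)$ transversely at $c_1\in W^{ss}(b_2)$. Inductively, suppose one has built $\widetilde{\Sigma}_k \subset \Delta_1$ and a time $T_{k-1}$ (with $T_0=0$) such that $\Sigma_k:=f^{T_{k-1}}(\widetilde{\Sigma}_k)$ is as $C^1$-close to $W^{uu}(a_k)\cap K$ as desired. By this closeness and the transversality $W^{uu}(a_k)\pitchfork W^s(N)$ at $c_k$, the piece $\Sigma_k$ meets $W^s(N)$ transversely at some $c_k'$ close to $c_k$, which lies on $W^{ss}(b_{k+1}')$ for some $b_{k+1}'\in N$ close to $b_{k+1}$ by continuity of the stable lamination. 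The minimality of $A_{k+1}$ together with the density of the $f$-orbit of $b_{k+1}\in A_{k+1}$ yields an integer $n_k$ with $f^{n_k}(b_{k+1})$ arbitrarily close to $a_{k+1}$; continuity of $f^{n_k}$ and of the unstable lamination then gives that $W^{uu}(f^{n_k}(b_{k+1}'))$ is $C^1$-close to $W^{uu}(a_{k+1})$ as well. Applying Theorem~2 to $\Sigma_k$ (viewed as a new transverse submanifold with associated base point $b_{k+1}'$) and concatenating the two approximations produces $\Sigma_{k+1}\subset f^{n_k}(\Sigma_k)\cap K$ which is $C^1$-close to $W^{uu}(a_{k+1})\cap K$. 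Set $T_k:=T_{k-1}+n_k$ and $\widetilde{\Sigma}_{k+1}:=f^{-T_k}(\Sigma_{k+1})\cap \widetilde{\Sigma}_k$, a nonempty $C^1$-disk nested inside $\widetilde{\Sigma}_k$.

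Pick any $y\in \widetilde{\Sigma}_n$. By construction, for each $k=1,\ldots,n-1$ the point $f^{T_{k-1}}(y)$ lies in the sub-disk of $\Sigma_k$ whose $n_k$-th iterate remains in the straightening box; following the proof of Theorem~1, this sub-disk is concentrated near the transverse intersection $c_k'\approx c_k$, so $f^{T_{k-1}}(y)\approx c_k$. The visits to the $\varrho$-neighborhoods $U_k$ of $A_k$ are then read off from these shadowings: for $k=1$ one has $y\in W^{uu}(a_1)$, so $f^{-m_1}(y)\in U_1$ for $m_1$ large enough; for $k\geq 2$, since $f^{T_{k-2}}(y)\approx c_{k-1}\in W^{ss}(b_k)$, minimality of $A_k$ supplies $m_k^*>0$ with $f^{m_k^*}(b_k)$ close to $a_k$, and the chain of continuity approximations $f^{T_{k-2}+m_k^*}(y)\approx f^{m_k^*}(c_{k-1})\approx f^{m_k^*}(b_k)\approx a_k$ yields $f^{\tau_k}(y)\in U_k$ at time $\tau_k:=T_{k-2}+m_k^*$. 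Choosing each $n_k$ large enough (in particular larger than $m_k^*-m_{k+1}^*$) makes the times $\tau_1<\tau_2<\cdots<\tau_n$ strictly increasing, so the orbit of $y$ is the desired $\Gamma$.

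The main obstacle is the quantitative dependency between the closeness estimates at successive stages: the $C^1$-tolerance for $\Sigma_k$ near $W^{uu}(a_k)$ required at stage $k$ depends, through continuity of the laminations and of the bounded-time iterates $f^{n_j}$ and $f^{m_j^*}$ used later, on the tolerances needed at all subsequent stages. The correct order of choices is therefore to first fix, using minimality, the finitely many integers $n_k$ and $m_k^*$ needed to land within $\varrho/2$ of the $a_k$'s, and only then to shrink $\Delta_1$ (equivalently, to enlarge each iteration time in Theorem~2) so that all approximations survive propagation through the whole chain. Theorem~2 provides exactly the freedom required, because the $C^1$-distance it delivers can be made as small as one wishes by taking the iteration index large enough.
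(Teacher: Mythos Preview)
Your forward disk-nesting strategy is a recognizable shadowing scheme and you rightly flag the dependency problem in the last paragraph, but the resolution you propose does not close the gap. The issue is that at stage $k$ you apply Theorem~2 to the approximating disk $\Sigma_k$, not to the true leaf $W^{uu}(a_k)$; its transverse intersection with $W^s(N)$ lies on $W^{ss}(b_{k+1}')$ for some $b_{k+1}'$ that is merely close to $b_{k+1}$ and in general does \emph{not} belong to $A_{k+1}$. The $\lambda$-lemma then compares $f^m(\Sigma_k)$ with $W^{uu}(f^m(b_{k+1}'))$, so you need $f^m(b_{k+1}')$ near $a_{k+1}$. You propose to fix $n_k$ first from the dense orbit of $b_{k+1}$ and transfer to $b_{k+1}'$ by continuity of $f^{n_k}$; but $n_k$ must simultaneously exceed the $\lambda$-lemma threshold, which depends on $\Sigma_k$, while the tolerance on $d(b_{k+1}',b_{k+1})$ deteriorates (a priori like $C^{-n_k}$) as $n_k$ grows --- and $b_{k+1}'$ is already determined once $\Sigma_k$ is. Your parenthetical ``equivalently, enlarge each iteration time'' contradicts ``first fix $n_k$'', and ``shrink $\Delta_1$'' does not by itself make $\Sigma_k$ any closer to $W^{uu}(a_k)$: only enlarging the earlier $n_j$'s does that. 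The forward scheme can be salvaged (for instance via the syndetic recurrence of compact minimal systems together with estimates from the proof of Theorem~1 that are uniform over graphs of slope at most $1$), but that is a different argument from the one you wrote.

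The paper bypasses all of this with a short backward induction on open balls, using only the $C^0$ part of the $\lambda$-lemma. One fixes a ball $B_n\subset\mathcal V_\varrho(A_n)$ centred at some point of $W^u(A_n)$ and shows: given any ball $B_{k+1}\subset\mathcal V_\varrho(A_{k+1})$ centred on $W^u(A_{k+1})$, there exist a ball $B_k\subset\mathcal V_\varrho(A_k)$ centred on $W^u(A_k)$ and an integer $q$ with $F^{q}(B_k)\subset B_{k+1}$. The point is that the $\lambda$-lemma is applied only to the \emph{genuine} leaf $\Delta=W^{uu}(a_k)$, whose intersection with $W^s(N)$ is exactly $c_k\in W^{ss}(b_{k+1})$ with $b_{k+1}\in A_{k+1}$; minimality of $A_{k+1}$ then applies directly to the orbit of $b_{k+1}$ and yields an iterate of $\Delta$ meeting $B_{k+1}$. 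Any point $y$ of that intersection lies on $W^u(A_k)$, so $F^{-q}(y)\in\mathcal V_\varrho(A_k)$ for $q$ large, and a small ball around $F^{-q}(y)$ serves as $B_k$ by continuity of $F^{q}$. No approximation error is carried along the chain, and no $b_{k+1}'$ ever appears.
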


\begin{figure}
\centering
\def\JPicScale{0.8}
\input{corarticle.pst}
\caption{Heteroclinic connections} \label{heter}
\end{figure}

\begin{proof} We fix $\varrho>0$ and we denote by $\mathcal{V}_{\varrho}(A_k)$ the $\varrho$-neighborhood of $A_k$ for all $k=1,\ldots,n$. Without any loss of generality, we can suppose that, for all $k=1,\ldots,n$,
$\mathcal{V}_{\varrho}(A_k)\subset \mathcal{U}$ (defined in Section~\ref{not}). Therefore, we can
restrict the problem to the straightening neighborhood
$\mathcal{V}\subset V$ (see Proposition~B and Proposition
\ref{vois}).

Fix $k=1,\ldots, n-1$ and fix a ball $B_{k+1}$ centered at some point $z$ of $W^u(A_{k+1})$ and
contained in $\mathcal{V}_\varrho(A_{k+1})$. Since $z$ is in $W^u(A_{k+1})$, there exists a unique
$d_{k+1}$ $\in$ $A_{k+1}$ such that $z$~$\in$~$W^{uu}(d_{k+1})$. Let
us set $\Delta:=W^{uu}(a_{k})$, then $\Delta$ is an immersed
$p$-dimensional $C^3$-submanifold of $M$, transversely intersecting
$W^s(N)$. For $m$ $\in$ $\N^*$, we let $b_{k+1}^m:=F^m(b_{k+1})$ and $\Delta^m$ be the
connected component of $F(\Delta^{m-1})\cap \mathcal{V}$ containing
$c_k^m:=F^m(c_k)$.

$\bullet$ By the $\lambda$-lemma, for all $\varepsilon>0$,
there exists $N_1$ $\in$ $\N$ such that for all $m\geq N_1$,
$\Delta^m$ is $\varepsilon$-close to $W^{uu}(b_{k+1}^m)$ (in the
sense of Definition \ref{epsilunclose}).



$\bullet$ Since $A_{k+1}$ is invariant, the sequence
$(b^m_{k+1})_{m\in \N}$ lies in $A_{k+1}$. Since this set is also
minimal, we can extract a subsequence $(b_{k+1}^{m_j})_{j\in \N}$
such that $\lim\limits_{j\rightarrow \infty}b_{k+1}^{m_j}=d_{k+1}$.
More precisely,

$$\forall \ \varepsilon > 0, \exists \ N_2 \in \N ; \ j\geq N_2\Rightarrow
d(b^{m_j}_{k+1},d_{k+1})<\varepsilon.$$

$\bullet$ The foliations being straightened, for $j$ large enough,
$W^{uu}(b_{k+1}^{m_j})$ is $\varepsilon$-close to $W^{uu}(d_{k+1})$.

$\bullet$ Therefore for $j$ large enough, $\d^{m_j}$ intersects $B_{k+1}$.

Let $y$ be in $\d^{m_j}\cap B_{k+1}$ which is in $W^u(A_k)$. Then, for $q$ large enough, $F^{-q}(y)\in\mathcal{V}_\varrho(A_k)$.
Therefore there exists a ball $B_k$ centered at $F^{-q}(y)$ and contained in $\mathcal{V}_\varrho(A_k)$ 
such that 
$$
F^{q}(B_k)\subset B_{k+1}.
$$
We proved then the existence of a ball $B_k$ centered on $W^u(A_k)$ in $\mathcal{V}_\varrho(A_k)$ and a positive $q$
such that $F^{q}(B_k)\subset B_{k+1}$.

Therefore, given a ball $B_n$ centered on $W^u(A_n)$ and contained in $\mathcal{V}_\varrho(A_n)$, 
an immediate induction proves the existence of an integer $q^*$ and  a ball $B_1$ centered on $W^u(A_1)$ 
and contained in $\mathcal{V}_\varrho(A_1)$, whose sequence of iterates intersects each $\mathcal{V}_\varrho(A_k)$ and
which moreover satisfies $F^{q^*}(B_1)\subset B_n$.
This proves our claim.

%
\end{proof}
%




\nocite{*} \subsection{Particular case: Arnold's example}\label{Arnold1}
We will see in this section that Arnold's system (\cite{Arn64}) satisfies all the assumptions of Corollary~\ref{cor} and thus, one easily
deduces the existence of  drifting orbits. In Arnold's example, the stable manifold of a torus transversely intersects the unstable manifold of the next torus. These manifolds are Lagrangian and the Lagrangian/Lagrangian intersections will easily yield the isotropic/coisotropic intersections needed in Corollary~\ref{cor} ($W^{uu}(a_{k})$ and
$W^s(N)$ transversely intersecting at $c_k$). We start with a reminder on Arnold's example and define the objects ($F$, $M$, $N$, the transition chain,...) needed to set up the context of Corollary~\ref{cor}.

\vspace{1mm}

The {\em autonomous} version of the Hamiltonian used by Arnold is defined
on $\T^3\times\R^3$ and is given by

$$H_{\varepsilon,\mu}(\theta,r)=\frac12(r_1^2+r_2^2)+r_3+\varepsilon
(\cos \theta_1-1)+\varepsilon\mu(\cos \theta_1-1)(\cos \theta_2+\sin\theta_3),$$
where $\theta=(\theta_1,\theta_2,\theta_3)$ $\in$ $\T^3$,
$r=(r_1,r_2,r_3)$ $\in$ $\R^3$ and $0<|\mu|<<|\varepsilon|<<1$.

\begin{theoreme}\em{\textbf{[Arnold]}}
Given $A<B$, there exists $\varepsilon_0>0$ such that for all
$\varepsilon$ $\in$ $]0;\varepsilon_0[$ there exists $\mu_0$ such
that for all $\mu$ $\in$ $]0;\mu_0[$, the system
$H_{\varepsilon,\mu}$ admits an orbit whose projection on the action
space $\R^3$ intersects the open sets $r_2<A$ and $r_2>B$.
\end{theoreme}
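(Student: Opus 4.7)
The plan is to verify that Arnold's system satisfies the hypotheses of Corollary~\ref{cor}, producing a transition chain of invariant tori whose $r_2$-projection spans $[A,B]$. Fix $A<B$ and work with the time-one map $f$ of $H_{\varepsilon,\mu}$ on a fixed energy surface $\{H_{\varepsilon,\mu}=E\}$, chosen so that a relatively compact region with $r_2\in[A-1,B+1]$ lies in it. For $\mu=0$ the Hamiltonian decouples as a pendulum in $(\theta_1,r_1)$ plus an integrable system in the remaining variables, with the pendulum having a hyperbolic fixed point $(\theta_1,r_1)=(0,0)$ whose eigenvalues are $\pm\sqrt\varepsilon$. After restriction to a compact region of interest, the set $N_0:=\{(\theta_1,r_1)=(0,0)\}$ is $3$-normally hyperbolic and controllable for $f_{|\mu=0}$ (the tangent dynamics, being a translation on the torus, has norm and conorm equal to $1$), symplectic, with trivial rank-one stable and unstable bundles. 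By persistence of normally hyperbolic invariant manifolds, for $|\mu|$ small enough this manifold perturbs to a normally hyperbolic symplectic submanifold $N=N_{\varepsilon,\mu}$ still fulfilling all the assumptions of Section~\ref{jfk} and hence of Corollary~\ref{cor}.

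The second step is to produce the transition chain on $N$. The restricted dynamics on $N$ is a small symplectic perturbation of an integrable one, with unperturbed frequencies $(r_2,1)$. KAM theory then yields a Cantor family of Diophantine invariant $2$-tori, whose complement has measure $O(\sqrt\mu)$; I pick finitely many tori $A_1,\ldots,A_n\subset N$ with $r_2$-values that are respectively less than $A$, greater than $B$, and such that the consecutive gaps are dominated by the splitting size of the separatrices. To certify the transversality hypothesis of Corollary~\ref{cor}, I apply the Poincar\'e-Melnikov method to the perturbation $\varepsilon\mu(\cos\theta_1-1)(\cos\theta_2+\sin\theta_3)$: for $\mu=0$ the stable and unstable manifolds of every $2$-torus in $N_0$ coincide along the pendulum separatrix, and the Melnikov function computed along a heteroclinic trajectory from $A_k$ to $A_{k+1}$ is a non-degenerate Morse function on $\T^2$ with simple zeros; these zeros give, for $\mu\neq 0$ small, heteroclinic points $c_k\in W^{uu}(a_k)\cap W^{ss}(b_{k+1})$ at which $W^{uu}(a_k)$ intersects $W^s(N)$ transversely, precisely as required by Corollary~\ref{cor}.

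Once the chain is in place, Corollary~\ref{cor} delivers, for any $\varrho>0$, an orbit $\Gamma$ that meets the $\varrho$-neighborhood of every $A_k$; choosing $\varrho<\min(A-r_2(A_1),\,r_2(A_n)-B)$ ensures that the projection of $\Gamma$ on the action space $\R^3$ meets both $\{r_2<A\}$ and $\{r_2>B\}$, which is the desired diffusion orbit. The main obstacle is the Melnikov analysis in Step~2: one must verify that the splitting function has non-degenerate zeros producing intersections transverse in the coisotropic/isotropic sense used by Corollary~\ref{cor}, and that this transversality survives the approximation of genuine KAM tori by quasi-periodic averages. This is essentially the computation Arnold performed in \cite{Arn64} (see also \cite{Mar96} and \cite{DLS} for modern treatments); the novelty of the present approach is only that the shadowing step, previously handled by a partially hyperbolic $\lambda$-lemma, is now replaced by the cleaner Corollary~\ref{cor} proved above.
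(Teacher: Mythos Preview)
Your overall strategy---build a normally hyperbolic manifold, a transition chain on it, verify the transversality via Melnikov, then invoke Corollary~\ref{cor}---matches the paper's. The execution, however, overcomplicates the setup and in doing so creates a gap.

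The crucial feature of Arnold's perturbation, which the paper exploits and you overlook, is that the term $\varepsilon\mu(\cos\theta_1-1)(\cos\theta_2+\sin\theta_3)$ vanishes identically on $\{\theta_1=0,r_1=0\}$. Hence the normally hyperbolic manifold $N$ is \emph{exactly} preserved for $\mu>0$ (no appeal to persistence of NHIMs is needed), and the restricted dynamics on $N$ remains \emph{exactly} integrable. Consequently every torus $T_\omega=\{r_2=\omega\}$ with $\omega$ irrational survives intact and is minimal; one simply chooses finitely many such tori with $r_2$-values spanning $[A,B]$ and consecutive spacing as small as one likes. No KAM theorem is involved.

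By invoking KAM instead, you manufacture the large-gap problem: you assert that the KAM gaps are $O(\sqrt\mu)$ and that ``consecutive gaps are dominated by the splitting size of the separatrices,'' but the Melnikov splitting here is $O(\mu)$, so for small $\mu$ one has $\sqrt\mu\gg\mu$ and your inequality fails. In a generic a~priori unstable system this is a genuine obstacle (addressed in \cite{DLS}, \cite{DH}, etc.\ by secondary tori or variational methods), but in Arnold's example it simply does not arise. Once you use the exact preservation of the tori, the rest of your argument---Melnikov transversality giving $W^{uu}(a_k)\pitchfork W^s(N)$ at $c_k$, then Corollary~\ref{cor}, then the choice of $\varrho$---is correct and coincides with the paper's. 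The paper also notes that the Lagrangian/Lagrangian transversality $W^u(T_{\omega_i})\pitchfork W^s(T_{\omega_{i+1}})$ automatically yields the isotropic/coisotropic transversality required by the corollary, which is worth making explicit.
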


The Hamiltonian $H_{\varepsilon,\mu}$ is a perturbation of
$H_{0,0}=\frac12(r_1^2+r_2^2)+r_3$, and the parameters $\varepsilon$
and $\mu$ play asymmetric roles: $\varepsilon$ preserves the
integrability and creates hyperbolicity, and $\mu$ breaks down the
integrability and causes instability. More precisely, when
$\varepsilon=0$, $\T^3\times\R^3$ is foliated by invariant
lagrangian tori, and when $\varepsilon>0$ and $\mu=0$, the system is
equivalent to the uncoupled product of a pendulum
($H_p(\theta_1,r_1)=\frac12r_1^2+\varepsilon(\cos \theta_1-1)$) with
the completely integrable system
$H_r(\theta_2,\theta_3,r_2,r_3)=\frac12r_2^2+r_3$. The resonant
surface given by the equation $r_1=0$, which is invariant and foliated
by invariant tori when $\varepsilon=0$, is destroyed. It gives rise
to a one-parameter family of $2$-dimensional invariant tori which are partially
hyperbolic, whose union is  the normally hyperbolic invariant manifold
$N':=\{0,0\}\times \T^2\times\R^2$. The invariant manifolds of $N'$
are the product of those of the hyperbolic point
$(\theta_1=0,r_1=0)$ with the annulus $\T^2\times\R^2$. When
$|\mu|>0$, we lose the integrability and the invariant manifolds of the tori do
not coincide anymore. The Poincar\'e-Melnikov integrals show that
there exists $\varepsilon_0>0$ such that for all $\varepsilon$ $\in$
$]0;\varepsilon_0[$ there exists $\mu_0$ such that for all $\mu$
$\in$ $]0;\mu_0[$, the invariant manifolds transversely intersect
along a homoclinic orbit. Note that Arnold chose the last term of the perturbation in
such a way that it vanishes on the invariant tori (because
$\theta_1=0$), and thus the previous partially hyperbolic tori are
preserved when $\mu>0$, as well as the normally hyperbolic manifold.

It is possible to choose a section $\mathcal{S}$ (see \cite{Mar96})
contained in an energy level $\mathfrak{H}$ and transverse (in
$\mathfrak{H}$) to the Hamiltonian flow. The Poincar\'e map associated
to $\mathcal{S}$ and defined in a neighborhood of
$N:=N'\cap\mathcal{S}$ (which is also normally hyperbolic) in
$\mathcal{S}$ will play the role of $F$ (this of course is immediate with the 
nonautonomous form of the system). Note that $\mathcal{S}$ can
be chosen so that the invariant manifolds of $N$ are the
intersections of those of $N'$ with $\mathcal{S}$.

Let $\omega$ be irrational and let $T_{\omega}$ be the torus in $N$ given
by the equation $r_2=\omega$. It is invariant and minimal (because
$\omega$ is irrational). Arnold proved the existence of a finite
family $(T_{\omega_i})_{i\in I}$ of those tori that have in addition
Lagrangian invariant manifolds with transverse heteroclinic connections:
$W^u(T_{\omega_i})\pitchfork W^s(T_{\omega_{i+1}})$.

To get Arnold's orbits, it suffices now to apply Corollary \ref{cor} to
the family $(T_{\omega_i})$, since this family is contained in a
normally hyperbolic manifold. The Lagrangian/Lagrangian intersection
implies the isotropic/coisotropic intersection needed in the
corollary. More precisely, for all $i\in I$, let $c_i$ $\in$ $W^u(T_{\omega_i})\pitchfork W^s(T_{\omega_{i+1}})$. We set $a_i$ the point in $T_{\omega_i}$ such that $c_i\in W^{uu}(a_i)$. It is easy to see that $W^{uu}(a_i)$ transversely intersects $W^s(N)$ at $c_i$. One gets then a transition chain as in Corollary~\ref{cor}.\\


\break

\addcontentsline{toc}{section}{References}
\bibliographystyle{amsalpha}
\bibliography{lambda-lemma}
\end{document}